\theoremstyle{plain}
\newtheorem{theorem}[equation]{Theorem}
\newtheorem{lemma}[equation]{Lemma}
\newtheorem{proposition}[equation]{Proposition}
\theoremstyle{definition}
\newtheorem{definition}[equation]{Definition}
\theoremstyle{remark}
\newtheorem{remark}[equation]{Remark}
\numberwithin{equation}{section}
\def\Xint#1{\mathchoice
{\XXint\displaystyle\textstyle{#1}}%
{\XXint\textstyle\scriptstyle{#1}}%
{\XXint\scriptstyle\scriptscriptstyle{#1}}%
{\XXint\scriptscriptstyle%
\scriptscriptstyle{#1}}%
\!\int}
\def\XXint#1#2#3{{\setbox0=\hbox{$#1{#2#3}{%
\int}$ }
\vcenter{\hbox{$#2#3$ }}\kern-.6\wd0}}
\def\barint{\,\Xint -} 
\def\bariint{\barint_{} \kern-.4em \barint}
\def\bariiint{\bariint_{} \kern-.4em \barint}
\renewcommand{\iint}{\int_{}\kern-.34em \int} 
\renewcommand{\iiint}{\iint_{}\kern-.34em \int} 
\newcommand{\dist}{\operatorname{dist}}
\renewcommand{\d}{\, \mathrm{d}}
\DeclareMathOperator{\supp}{supp}
\DeclareMathOperator{\diam}{diam}
\begin{document}
\allowdisplaybreaks

\title[Square Functions Controlling Smoothness and Higher-Order Rectifiability]{Square Functions Controlling Smoothness and Higher-Order Quantitative Rectifiability}

\author{John Hoffman}
\address{Department of Mathematics
	\\
	Florida State University
	\\
	Tallahassee, FL 32306, USA}
\email{jh23bq@fsu.edu}

\subjclass[2010]{28A75, 30L99, 43A85.}
\date{\today}

\begin{abstract}
We provide new characterizations of the $BMO$-Sobolev space $I_{\alpha}(BMO)$ for the range $0 < \alpha <2$.  
When $0 < \alpha <1$, our characterizations are in terms of square functions measuring multiscale approximation of
constants, and when $1 \leq \alpha <2$ our characterizations are in terms of square functions measuring multiscale approximation by linear functions.
\end{abstract}

\maketitle
\tableofcontents

\section{Introduction}

This paper concerns new characterizations of the $BMO$-Sobolev spaces $I_{\alpha}(BMO)$ for $0 < \alpha <2$ in terms of geometric square functions which 
measure multiscale approximation by constants for $\alpha \in (0,1)$, and by linear functions for $\alpha \in [1,2)$.  The spaces $I_{\alpha}(BMO)$ were 
introduced by Strichartz in \cite{Str} and \cite{Str2}.  They can be thought of as the image of $BMO$ under the $\alpha$\textsuperscript{th} Riesz potential, although 
to be rigorous one needs to define them as an appropriate class of distributions, which in turn coincide with functions.  These spaces occur naturally
in the theory of commutators of fractional singular integrals  (see for example \cite{CDH}, \cite{CDZ}, \cite{CHO}, \cite{CTWX}, \cite{HLY}, and \cite{M} ) and the theory of parabolic geometric measure theory (more on this below.)  The new characterization of these spaces
established in this paper provide a connection between these spaces and the theory of higher-order rectifiability.


Before stating the main Theorem of this paper, we provide a history of previous results which motivate our present inquiry.  We recall the notion of a ``$\beta$-number," which measures how well a measure
is approximated by a plane at a given point and scale.  Given a measure $\mu$ on $\mathbb R^d$ and an integer $k \in {1,...,d-1}$, we can define
the $k$-dimensional $\beta$-numbers as
\begin{align}\label{l2beta}
\beta_{2,k}(X,r) = \inf_{\text{$P$ a $k$-plane}} \left(\fint_{B_r(X)}\left(\frac{\dist(Y,P)}{r} \right)^2 \d \mu(y) \right)^{\frac{1}{2}}.
\end{align}
``$\beta$-numbers" like the one above have a rich history which is too long to discuss in full, but we will
provide some relevant context.
Such quantities were first introduced by Peter Jones in \cite{J}, where the author defined an $L^\infty$ version of the quantity above.  
 In \cite{DS1}, David and Semmes show
that, in the presence of appropriate scale-invariant regularity of the surface measure, Carleson measure estimates involving the 
$\beta$-numbers as defined in \eqref{l2beta} are equivalent to a number of quantitative conditions, including boundedness of singular integral convolution operators
and the existence of a ``corona decomposition," a multiscale decomposition of the dyadic grid of the set into trees which are
well approximated by Lipschitz graphs.  The aforementioned Carleson measure condition is
\begin{align}\label{CMest}
\int_0^R \int_{B_R(X)\cap \Sigma} \beta_{2,k}(Y,r)^2 \frac{\d\mu(Y) \d r}{r} \leq CR^k, \,\,\,\,\,\,\,\,\,\,
\text{for all $X \in \Sigma$, $R \in (0,\diam \Sigma)$}.
\end{align}
We also define, for $k \in \{1,...,d-1\}$ the \textit{upper $k$-density} and \textit{lower $k$-density}, respectively,  of a measure $\mu$ at a point $X \in \supp(\mu)$
$$\theta^{k,*}(\mu)(X) = \limsup_{r \rightarrow 0^+} \frac{\mu(B_r(X))}{(2r)^k},$$
$$\theta^{k}_*(\mu)(X) = \liminf_{r \rightarrow 0^+} \frac{\mu(B_r(X))}{(2r)^k}.$$
For $k \in \{1,...,d-1\}$, a say that a measure $\mu$ is $k$-rectifiable if there are Lipschitz functions $f_i: \mathbb R^k \rightarrow \mathbb R^d$ such that
$$\mu\bigg( \mathbb R^d \setminus \bigcup_i f_i(\mathbb R^n) \bigg) =0.$$
The definition of $C^{1,\alpha}$ rectifiability is nearly identical, the only difference being that the Lipschitz functions $f_i$ are replaced by $C^{1,\alpha}$ functions.\\

In \cite{AT} and \cite{T}, Azzam and Tolsa prove qualitative analogs of the result of David and Semmes.  Firstly, in \cite{AT}, the authors show that, 
in the presence of a mild hypothesis on the density of a measure, a qualitative version of the Carleson measure condition \eqref{CMest} implies rectifiability of the measure.  
\begin{theorem}[\cite{AT}]\label{AzzamTolsa}
Suppose $\mu$ is a measure on $\mathbb R^d$ such that the upper density $\theta^{k,*}(\mu)(X)$ of a measure
$\mu$ satisfies $0 < \theta^{k,*}(\mu)(X) < \infty$ for $\mu$-a.e. $X \in \mathbb R^d$ for some $k \in \{1,...,d-1\}$,
and one has the following qualitative version of the aforementioned Carleson measure
estimate \eqref{CMest}
$$\int_0^1 \beta_{2,k}(X,r)^2 \frac{\d r}{r} < \infty\,\,\,\, \text{$\mu$-a.e. $X \in \supp(\mu)$},$$
then $\mu$ is a $k$-rectifiable measure.
\end{theorem}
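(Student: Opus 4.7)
The overall plan follows the standard template in quantitative rectifiability: reduce to a compact set on which the hypotheses become quantitative, build a big piece of a Lipschitz $k$-graph inside it, and exhaust. The first move is to turn the $\mu$-a.e.\ hypotheses into uniform bounds. By countable exhaustion combined with Egorov's theorem, one extracts a compact set $E \subset \supp(\mu)$ with $\mu(E) > 0$ on which $\theta^{k,*}(\mu)$ is uniformly bounded above and below, and such that the tail $\int_0^r \beta_{2,k}(X,s)^2\,\tfrac{\d s}{s}$ tends to $0$ uniformly in $X \in E$ as $r \to 0^+$. Since rectifiability is countably stable, it suffices to show $\mu|_E$ is $k$-rectifiable, and for this, by a further exhaustion argument (remove $E\cap \Gamma$ and iterate on the residual set), it is enough to produce a single Lipschitz $k$-graph $\Gamma$ with $\mu(E\cap \Gamma) > 0$.

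The heart of the matter is constructing such a $\Gamma$. Fix $X_0 \in E$ and a scale $r_0$ small enough that the uniform tail decay forces the total $\beta_{2,k}^2$-energy inside $B_{r_0}(X_0)$ to lie below any prescribed threshold. Build a David-type dyadic decomposition of $E\cap B_{r_0}(X_0)$ and, to each pseudo-cube $Q$ at scale $r_Q$ centered at $x_Q$, associate the best $k$-plane $P_Q$ attaining $\beta_{2,k}(x_Q,r_Q)$. Now run a stopping-time procedure in which $Q$ is stopped when one of the following occurs: (i) $\beta_{2,k}(x_Q, r_Q) > \eta$; (ii) the tilt $\angle(P_{\widehat{Q}}, P_Q)$ between parent and child planes exceeds $\eta$; or (iii) the density $\mu(B_{r_Q}(x_Q))/r_Q^k$ leaves the allowed window. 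The Carleson-type bound on $\beta_{2,k}^2$ controls the total $\mu$-mass of cubes stopped under (i) by a summation argument; tilt estimates of the form $\angle(P_Q, P_{Q'}) \lesssim \eta$ follow from elementary linear algebra whenever both planes provide small $L^2$-approximation of a non-degenerate portion of the measure in the same ball, handling (ii); and the uniform density bounds on $E$ make (iii) negligible provided $r_0$ is small and $E$ is locally dense in $\supp(\mu)$. On the subtree of cubes that are never stopped, the planes $P_Q$ vary Lipschitzly across scales and glue to a global Lipschitz graph $\Gamma$ in the spirit of Peter Jones' traveling salesman construction (and its higher-dimensional analogs), with the limit set of this subtree lying on $\Gamma$. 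Choosing $\eta$ and $r_0$ so that the stopped mass is strictly less than $\mu(E\cap B_{r_0}(X_0))$ guarantees that this limit set carries positive $\mu$-measure.

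The principal obstacle is the absence of uniform Ahlfors regularity: the David--Semmes machinery that yields Lipschitz graph approximations from $\beta_2$ Carleson conditions is built for Ahlfors regular measures and does not apply off the shelf. To bridge the gap one must couple the $\beta_{2,k}^2$-smallness to the density hypothesis throughout the stopping construction, and handle scales on which $\mu$ is locally thin by working with $\mu|_E$ and $\mathcal{H}^k|_E$ in tandem: the former is what must be rectified, the latter provides a reliable $k$-dimensional background against which to measure the approximating planes $P_Q$ and against which the $\beta_{2,k}$-hypothesis can be transferred via Radon--Nikodym comparison. Managing this coupling, and in particular balancing the three stopping thresholds so that the total stopped $\mu$-mass is a definite fraction below $\mu(E\cap B_{r_0}(X_0))$, is where the real technical burden lies.
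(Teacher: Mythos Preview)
This theorem is not proved in the paper: it is quoted from \cite{AT} as background and motivation, with no argument given. Consequently there is no ``paper's own proof'' to compare your proposal against.

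For what it is worth, your outline is a reasonable high-level caricature of the Azzam--Tolsa strategy (Egorov reduction to uniform bounds, David--Mattila lattice, stopping-time/corona construction controlling $\beta$'s, tilts, and density, then a big-piece-of-Lipschitz-graph conclusion plus exhaustion). But since the present paper simply cites the result, your write-up should do the same: attribute Theorem~\ref{AzzamTolsa} to \cite{AT} rather than attempt a self-contained proof here.
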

In \cite{T}, the author proves a result in the converse direction.
\begin{theorem}[\cite{T}]
Let $1 \leq p \leq 2$.  Suppose $\mu$ is a finite Borel measure in $\mathbb R^d$ which is $k$-rectifiable, $k \in \{1,...,d-1\}$.  Then
\begin{align*}
\int_0^\infty \beta_{2,k}(X,r)^2 \frac{\d r }{r} < \infty\,\,\,\, \text{ for $\mu$-a.e. $X \in \mathbb R^d$}.
\end{align*}
\end{theorem}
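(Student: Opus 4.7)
The plan is to deduce the pointwise statement from the Carleson-type estimate \eqref{CMest} (which, by David--Semmes, holds for Lipschitz graphs) after decomposing the rectifiable measure into pieces, each supported on a single Lipschitz graph. The argument has three steps: first, a measure-theoretic decomposition using the definition of $k$-rectifiability; second, an application of the David--Semmes Carleson measure estimate on each Lipschitz piece; and third, a comparison of $\beta$-numbers that transfers the estimate from the Lipschitz pieces back to $\mu$ itself.

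For the decomposition, I would use the definition to write $\supp\mu \subset \bigcup_i f_i(\mathbb{R}^k)$ with $f_i$ Lipschitz, up to a $\mu$-null set. By standard refinements (applying Rademacher, Lusin-type selection, and Whitney-type arguments), this can be improved so that $\mu$ is represented as $\sum_i \mu_i$, where each $\mu_i$ is the restriction of $\mu$ to a Borel piece $E_i$ contained in a $k$-dimensional Lipschitz graph $\Gamma_i$ over a $k$-plane $V_i$, with Lipschitz constant as small as desired. A further truncation using the Besicovitch differentiation theorem lets us assume $\mu_i \ll \mathcal{H}^k|_{\Gamma_i}$ with density bounded above and below $\mu_i$-a.e. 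Since each $\Gamma_i$ is an Ahlfors $k$-regular Lipschitz graph, it is uniformly rectifiable, and the David--Semmes theorem gives \eqref{CMest} with $\mu$ replaced by $\mathcal{H}^k|_{\Gamma_i}$. Fubini then yields
$$\int_0^\infty \beta_{2,k}^{\mathcal{H}^k|_{\Gamma_i}}(Y,r)^2 \frac{\d r}{r} < \infty \quad \text{for } \mathcal{H}^k|_{\Gamma_i}\text{-a.e.\ } Y,$$
and hence for $\mu_i$-a.e.\ $Y$ by absolute continuity.

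The remaining step, which I expect to be the main obstacle, is to compare $\beta_{2,k}^{\mu}(Y,r)$ with $\beta_{2,k}^{\mathcal{H}^k|_{\Gamma_i}}(Y,r)$ at $\mu_i$-a.e.\ $Y$. The bounded-density comparison between $\mu_i$ and $\mathcal{H}^k|_{\Gamma_i}$ handles the contribution of $\mu_i$ itself, but controlling the interference of the other pieces $\sum_{j\neq i}\mu_j$ inside $B_r(Y)$ requires a density-point analysis: at $\mu_i$-a.e.\ $Y_0 \in E_i$, one checks that $(\mu - \mu_i)(B_r(Y_0)) = o(r^k)$ as $r \to 0$, so that in the integrand $(\dist(\cdot,P)/r)^2$ the mass from other graphs is negligible compared to $\mu_i(B_r(Y_0)) \asymp r^k$. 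The delicate point here is that ``small scales'' must be interpreted $\mu_i$-a.e.: above some threshold $r_0(Y_0)$ the other pieces can dominate, but the tail $\int_{r_0(Y_0)}^\infty \beta_{2,k}^\mu(Y_0,r)^2 \, dr/r$ is controlled trivially using the finiteness of $\mu$ (since $\beta_{2,k}^\mu(Y,r) \lesssim 1$ and the integrand is eventually zero once $B_r(Y) \supset \supp\mu$). Combining these estimates over all $i$ yields the pointwise a.e.\ finiteness of $\int_0^\infty \beta_{2,k}^\mu(Y,r)^2 \frac{\d r}{r}$.
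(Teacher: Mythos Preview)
This theorem is not proved in the present paper at all: it is quoted from Tolsa \cite{T} as background in the introduction, with no argument given here. So there is no ``paper's own proof'' to compare against; any assessment has to be of your outline on its own merits (and, implicitly, against what Tolsa actually does in \cite{T}).

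On its merits, your outline has a genuine gap in Step 3, and it is exactly the reason the result in \cite{T} is nontrivial. After choosing the optimal plane $P$ for $\mu_i$ you get, for $Y\in E_i$,
\[
\beta_{2,k}^{\mu}(Y,r)^2 \;\le\; \beta_{2,k}^{\mu_i}(Y,r)^2 \;+\; \frac{(\mu-\mu_i)(B_r(Y))}{\mu(B_r(Y))}.
\]
At a $\mu$-density point of $E_i$ the last ratio is $o(1)$ as $r\to 0$, but to conclude $\int_0^{r_0}\beta_{2,k}^{\mu}(Y,r)^2\,\frac{\d r}{r}<\infty$ you must integrate this error against $\d r/r$, and the Lebesgue density theorem gives no rate whatsoever. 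There is no reason for $\int_0^{r_0}\frac{(\mu-\mu_i)(B_r(Y))}{\mu(B_r(Y))}\,\frac{\d r}{r}$ to be finite at a generic density point, and the same issue recurs when you pass from $\mu_i$ to $\mathcal H^k|_{\Gamma_i}$ (the set $\Gamma_i\setminus E_i$ plays the role of the ``other pieces''). A secondary issue is Step 2: the Carleson estimate \eqref{CMest} plus Fubini gives $\int_0^R\beta_{2,k}^{\mathcal H^k|_{\Gamma_i}}(Y,r)^2\,\frac{\d r}{r}<\infty$ a.e.\ for each fixed $R$, not the full $\int_0^\infty$; on an unbounded Lipschitz graph the tail need not be finite (think of $f(x)=|x|$). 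You patch the tail for $\mu$ using finiteness of $\mu$, but your sentence ``the integrand is eventually zero once $B_r(Y)\supset\supp\mu$'' is false, and even the correct decay $\beta^\mu\lesssim \mathrm{diam}(\supp\mu)/r$ needs $\supp\mu$ bounded, which is not assumed.

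Tolsa's proof in \cite{T} does not proceed by this kind of piecewise density comparison. It uses a stopping-time (corona-type) construction together with quantitative estimates that control, in an integrated $L^2$ sense, precisely the interaction terms your argument tries to dismiss pointwise; that machinery is what supplies the missing ``rate''.
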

In \cite{ENV}, Edelen, Naber, and Valtorta prove a similar theorem to that proved by Azzam and Tolsa \cite{AT} above.  
\begin{theorem}[\cite{ENV}]
Suppose $\mu$ is a measure on $\mathbb R^d$, such that for 
some $k \in \{1,...,d-1\}$ $\theta^{k*}(\mu)(X)>0$ and $\theta^k_*(X)<\infty$  for $\mu$-a.e. $X \in \mathbb R^d$, 
and such that
$$\int_0^1 \beta_{2,k}(X,r)^2 \frac{\d r}{r} < \infty\,\,\,\, \text{$\mu$-a.e. $X \in \supp(\mu)$},$$
then $\mu$ is a $k$-rectifiable measure.
\end{theorem}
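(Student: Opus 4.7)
The plan is to deduce rectifiability from the pointwise square function bound by first reducing to a uniform setting and then invoking a quantitative (discrete) Reifenberg-type theorem. First I would perform a standard exhaustion: by the hypotheses and Egoroff's theorem, for every $\eta>0$ there is a compact set $E\subset \supp(\mu)$ with $\mu(\supp(\mu)\setminus E)<\eta$, constants $0<c_1<c_2<\infty$, and $r_0>0$ such that
\begin{align*}
c_1 r^k \;\leq\; \mu(B_r(X)) \;\leq\; c_2 r^k \qquad \text{for all } X\in E,\; 0<r\leq r_0,
\end{align*}
and such that
\begin{align*}
\sup_{X\in E}\int_0^{r_0}\beta_{2,k}(X,r)^2 \,\frac{\d r}{r} \;\leq\; M<\infty.
\end{align*}
(The lower bound on $\mu(B_r(X))$ uses $\theta^{k,*}(\mu)>0$ together with a Besicovitch-covering style argument; the upper bound uses $\theta^k_*<\infty$ after further Egoroff refinement.) Since countable unions of rectifiable sets are rectifiable, it suffices to show $\mu|_E$ is $k$-rectifiable.

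Next, I would exploit the pointwise bound to extract a scale-invariant smallness statement. By Fubini, for any $\delta>0$ the set of ``bad'' pairs $(X,r)\in E\times(0,r_0]$ with $\beta_{2,k}(X,r)>\delta$ has controlled $\mu\otimes\tfrac{\d r}{r}$-measure. Combined with the Ahlfors-type bounds on $E$, this means that at most points and most scales the measure is close in the $L^2$-sense to some $k$-plane $P_{X,r}$. The plan then is to cover $E$, up to a small-measure exceptional set, by balls on which $\beta_{2,k}(X,r)$ stays small at every scale below some fixed $r_1$, and apply the discrete Reifenberg theorem of Naber--Valtorta: if the $\beta_{2,k}$-coefficients (weighted by $\mu$) satisfy a uniform Dini/integrated smallness condition at all scales in a ball, then $E$ meets that ball in a subset of a bi-Lipschitz image of a $k$-disk, with bi-Lipschitz constant close to $1$ and capturing a definite fraction of the $\mu$-mass. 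Iterating this construction on the residual exceptional set and letting $\eta\to 0$ exhausts $\mu$-almost all of $\supp(\mu)$ by countably many Lipschitz images of $\mathbb R^k$.

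The main obstacle is the discrete Reifenberg step. Unlike the David--Semmes setting, here the set $E$ is not assumed a priori to be Ahlfors $k$-regular in a two-sided Carleson sense; the only control is on the measure $\mu$ and its $\beta_{2,k}$-numbers. The technical heart of the argument is a multiscale ``best plane'' coherence estimate bounding the tilt $\Ang(P_{X,r},P_{X,r/2})$ by $\beta_{2,k}(X,r)+\beta_{2,k}(X,r/2)$, together with a packing/telescoping argument that turns the integrated $\beta_{2,k}^2$-smallness into a Lipschitz graph of small norm over the initial approximating plane. Absorbing the errors from the $L^2$-approximation into a genuine pointwise bi-Lipschitz map -- rather than merely an $L^2$-graph -- is where the delicate covering lemmas of \cite{ENV} are essential.
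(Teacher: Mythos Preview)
The paper does not contain a proof of this statement: it is quoted from \cite{ENV} as background material in the introduction, with no argument given. So there is no ``paper's own proof'' to compare against.

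Your sketch is a reasonable outline of the strategy actually used in \cite{ENV}: reduce via Egoroff/exhaustion to a compact subset with uniform density and square-function bounds, then feed those bounds into a quantitative (discrete) Reifenberg theorem to produce bi-Lipschitz parametrizations. One caution: the density hypotheses here are asymmetric ($\theta^{k,*}>0$ and $\theta^k_*<\infty$), which is weaker than two-sided Ahlfors regularity on the reduced set $E$, so your claimed two-sided bound $c_1 r^k \le \mu(B_r(X)) \le c_2 r^k$ does not follow directly from Egoroff alone; extracting a workable substitute is part of the content of \cite{ENV}. Since the present paper simply cites the result, no further comparison is possible.
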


In recent years there have been several papers regarding ``$C^{1,\alpha}$ variants" of the results of Azzam and Tolsa \cite{AT} and Edelen, Naber, 
and Valtorta \cite{ENV}.  
In \cite{Ghi}, the author proves the following Theorems.
\begin{theorem}[\cite{Ghi}]\label{Ghithm1}
Let $\mu$ be a Radon measure on $\mathbb R^d$ such that
for some $k \in \{1,...,d-1\}$ $0 < \theta^k_*(\mu;X)$ and $\theta^{k*}(\mu;X) < \infty$ for $\mu$-a.e. $X \in \mathbb R^d$ and let
$\alpha \in (0,1)$.  Assume for $\mu$-a.e. $X \in \mathbb R^d$
\begin{align}\label{ghisqr1}
\int_0^1 \left(\frac{\beta_{2,k}(X,r)}{r^\alpha}\right)^2 \frac{\d r}{r} < \infty.
\end{align}
Then $\mu$ is countably $C^{1,\alpha}$-rectifiable of dimension $k$.
\end{theorem}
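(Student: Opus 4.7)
The plan is to first observe that the weighted hypothesis \eqref{ghisqr1} is strictly stronger than the bare square function condition $\int_0^1 \beta_{2,k}(X,r)^2 \frac{\d r}{r} < \infty$, since the weight $r^{-2\alpha}$ blows up at the origin. Consequently the Edelen--Naber--Valtorta theorem immediately gives that $\mu$ is $k$-rectifiable and that an approximate tangent $k$-plane exists at $\mu$-a.e.\ $X$. It remains to upgrade Lipschitz to $C^{1,\alpha}$ rectifiability. By a Lusin/Egorov argument I would fix a compact $E\subset\supp\mu$ of positive $\mu$-measure on which $\theta^k_*(\mu;X)$ and $\theta^{k*}(\mu;X)$ are pinched between two positive constants and on which the tail integral is uniformly bounded, $\int_0^1 (\beta_{2,k}(X,r)/r^\alpha)^2 \frac{\d r}{r} \leq M$ for all $X\in E$, and then show that $E$ lies in the graph of a single $C^{1,\alpha}$ map $f:\mathbb R^k\to\mathbb R^{d-k}$.

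For each $X\in E$ and $r\in(0,1]$, let $P(X,r)$ denote a $k$-plane through $X$ that nearly minimizes $\beta_{2,k}(X,r)$. The core of the argument will be the two Hölder-type estimates
\[
\Ang(P(X,r), P(X,s)) \lesssim r^\alpha \quad (0<s\leq r\leq 1), \qquad \Ang(P(X,r),P(Y,r)) \lesssim r^\alpha \quad (r=|X-Y|),
\]
together with the pointwise flatness bound $\dist(Y, X+P(X,r)) \lesssim r^{1+\alpha}$ for $X,Y\in E$ with $r=|X-Y|$. The two angle estimates follow by a standard telescoping argument: the uniform lower density on $E$ prevents the $L^2$-best plane from rotating freely, yielding $\Ang(P(X,2^{-j}), P(X,2^{-j-1})) \lesssim \beta_{2,k}(X, C 2^{-j})$, and Cauchy--Schwarz against $r^{-\alpha}$ over dyadic scales $\leq r$ gives
\[
\Ang(P(X,r), P_X) \leq \sum_{2^{-j}\leq r} \Ang(P(X,2^{-j}), P(X,2^{-j-1})) \lesssim \bigg(\sum_{2^{-j}\leq r} 2^{-2j\alpha}\bigg)^{\!1/2}\! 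M^{1/2} \lesssim r^\alpha,
\]
where $P_X := \lim_{r\to 0^+} P(X,r)$; the horizontal estimate is completely analogous at the common scale $r=|X-Y|$. In particular $X\mapsto P_X$ is well-defined and $\alpha$-Hölder on $E$.

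With these three estimates in hand, I would conclude by invoking a $C^{1,\alpha}$ Whitney extension theorem (Glaeser's, or one of the formulations used in higher-order rectifiability): the Hölder continuity of $X\mapsto P_X$ together with pointwise $(1+\alpha)$-flatness of $E$ over each plane $X+P_X$ are precisely its hypotheses, and the resulting $C^{1,\alpha}$ graph contains $E$. Summing over a countable exhaustion of $\supp\mu$ by such compact sets $E$ then yields countable $C^{1,\alpha}$-rectifiability of $\mu$. The hard part, and the main technical obstacle, is the pointwise flatness bound $\dist(Y,X+P_X) \lesssim |X-Y|^{1+\alpha}$: the $\beta$-number only provides \emph{averaged} $L^2$ control, so I would need a small-ball covering argument together with the uniform lower density on $E$ to argue that if some $Y\in E$ near $X$ were anomalously far from $X+P(X,r)$, then a small ball around $Y$ (of radius comparable to $\dist(Y,X+P(X,r))$) would contribute too much to $\beta_{2,k}(X,r)^2$, contradicting the uniform bound at scale $r\sim|X-Y|$. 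Everything else is essentially bookkeeping once the flatness bound is established.
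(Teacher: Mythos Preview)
This theorem is not proved in the present paper: it is quoted verbatim from \cite{Ghi} as background, with no argument supplied here. Consequently there is no ``paper's own proof'' to compare your proposal against.

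That said, your outline is a plausible route to the result and is in fact closer in spirit to the argument of Del Nin and Idu \cite{DI} (approximate tangent paraboloids plus a $C^{1,\alpha}$ Whitney extension) than to Ghinassi's original proof in \cite{Ghi}, which proceeds via a Reifenberg-type parametrization scheme rather than Whitney extension. Both routes require the same core estimates you identify: H\"older control on the angle between best-approximating planes across scales and locations, and a pointwise flatness bound $\dist(Y, X+P_X) \lesssim |X-Y|^{1+\alpha}$ on a set of uniform density. Your telescoping-plus-Cauchy--Schwarz derivation of the angle estimate is the standard one. The step you flag as the main obstacle---extracting pointwise flatness from $L^2$-averaged $\beta$-numbers via lower density---is indeed the crux, and your small-ball/Chebyshev idea is the right mechanism, though making it rigorous requires some care (in particular, the near-minimizing plane for $\beta_{2,k}(X,r)$ need not pass through $X$, and one must also control the density of $E$ itself, not just of $\supp\mu$, at the relevant scales after the Egorov truncation).
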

\begin{theorem}[\cite{Ghi}]\label{Ghithm2}
Let $\mu$ be a Radon measure on $\mathbb R^d$ such that
for some $k \in \{1,...,d-1\}$
 $0 < \theta^{k*}(\mu;x)< \infty$ for $\mu$-a.e. $X \in \mathbb R^d$
 and let
$\alpha \in (0,1)$.  Assume for $\mu$-a.e. $X \in \mathbb R^d$
\begin{align}\label{ghisqr2}
\int_0^1 \left(\frac{\beta_{2,\infty}(X,r)}{r^\alpha}\right)^2 \frac{\d r}{r} < \infty.
\end{align}
Then $\mu$ is countably $C^{1,\alpha}$-rectifiable of dimension
$k$.
\end{theorem}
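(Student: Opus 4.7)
The plan is to deduce $C^{1,\alpha}$-rectifiability of $\mu$ from pointwise convergence of approximate tangent planes at a $C^{1,\alpha}$-rate, followed by a $C^{1,\alpha}$ Whitney extension. The principal new feature compared with Theorem~\ref{Ghithm1} is the absence of a lower density bound: the $\beta_{2,\infty}$ quantity (an $L^\infty$-type deviation from a plane) must compensate by pinning down $\supp\mu$ pointwise rather than only in a mean-square sense. I would begin with a standard Egorov/Chebyshev localization, decomposing $\supp\mu$ into a countable union of compact sets $E_m$ (up to a $\mu$-null set) on which three conditions hold uniformly: $\theta^{k,*}(\mu;X)\leq C_m$; the square-function integral $\int_0^1\bigl(\beta_{2,\infty}(X,r)/r^\alpha\bigr)^2\,\d r/r\leq C_m$; and $k$-dimensional polynomial upper growth $\mu(B_r(X)\cap E_m)\leq C_m r^k$. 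It then suffices to show each $E_m$ is countably $C^{1,\alpha}$-rectifiable of dimension $k$.

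Next, fix $E=E_m$ and $X\in E$. At each dyadic scale $r_j=2^{-j}$ choose a $k$-plane $P_j(X)$ through $X$ nearly attaining the infimum in $\beta_{2,\infty}(X,8r_j)$. A two-scale comparison of optimal planes at nested balls gives $\Ang(P_j(X),P_{j+1}(X))\lesssim\beta_{2,\infty}(X,8r_j)+\beta_{2,\infty}(X,8r_{j+1})$, and Cauchy--Schwarz together with the square-function bound yields
\[
\sum_{j\geq J}\Ang(P_j(X),P_{j+1}(X))
\;\lesssim\;
\Bigl(\sum_{j}(\beta_{2,\infty}(X,8r_j)/r_j^\alpha)^2\Bigr)^{1/2}
\Bigl(\sum_{j\geq J}r_j^{2\alpha}\Bigr)^{1/2}
\;\lesssim\; r_J^\alpha.
\]
Consequently $P_j(X)\to P(X)$ with $\Ang(P_j(X),P(X))\lesssim r_j^\alpha$, giving a well-defined ``$C^{1,\alpha}$-tangent plane'' at $\mu$-a.e.\ $X\in E$.

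To produce the graphs, fix a density point $X_0\in E$ of the continuity set of $X\mapsto P(X)$. In coordinates splitting $\mathbb R^d=P(X_0)\oplus P(X_0)^\perp$, the $L^\infty$-character of $\beta_{2,\infty}$ forces $\supp\mu\cap B_{r_0}(X_0)$ to lie in a narrow conical neighborhood of $P(X_0)$ at every scale, so the orthogonal projection $\pi_{P(X_0)}$ is injective on a compact subset $E'\subset E\cap B_{r_0}(X_0)$ of comparable $\mu$-measure. Writing $h\colon\pi_{P(X_0)}(E')\to P(X_0)^\perp$ for the resulting height map and assigning to each $Y\in E'$ the slope determined by $P(Y)$, the explicit $r^\alpha$-rate of convergence of the $P_j(Y)$ supplies the Campanato/Whitney compatibility for a $C^{1,\alpha}$-jet. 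The $C^{1,\alpha}$ Whitney extension theorem then produces a function $F\colon P(X_0)\to P(X_0)^\perp$ of class $C^{1,\alpha}$ whose graph contains $E'$, and a Vitali-type covering exhausts $E$ up to a $\mu$-null set.

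The main obstacle is precisely the step of producing $E'$: verifying, in the absence of a lower density bound, that the $L^\infty$-thinness encoded by $\beta_{2,\infty}$ is strong enough to force injectivity of $\pi_{P(X_0)}$ on a set of full $\mu$-measure in $E\cap B_{r_0}(X_0)$, and that the assembled $1$-jet satisfies the Whitney compatibility condition at the requested $C^{1,\alpha}$-rate uniformly. This is the step where $\beta_{2,\infty}$ does the work that the weaker $\beta_{2,k}$ could not, and is what allows the lower-density hypothesis to be dropped here.
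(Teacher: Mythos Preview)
This theorem is not proved in the present paper. It is stated in the introduction purely as background: it is a result of Ghinassi, cited from \cite{Ghi}, and the paper offers no argument for it whatsoever. The paper's own contributions begin with Theorem~\ref{mainthm} and concern characterizations of $I_\alpha(BMO)$ via the $\nu_0$ and $\nu_1$ square functions; Theorems~\ref{Ghithm1}, \ref{Ghithm2}, and \ref{DIthm} are quoted only to motivate that work. So there is no ``paper's own proof'' against which to compare your proposal.

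That said, your outline is a reasonable sketch of the strategy one finds in \cite{Ghi} (and, in a closely related form, in \cite{DI}): localize to sets where the square function and density are uniformly controlled, telescope the angles between best-approximating planes at dyadic scales to get a limiting tangent plane with $r^\alpha$ rate of convergence, and then feed the resulting $1$-jet into the $C^{1,\alpha}$ Whitney extension theorem. Your identification of the key point --- that the $L^\infty$ nature of $\beta_{2,\infty}$ pins down \emph{all} of $\supp\mu$ near the approximating plane, which is what permits dropping the lower-density hypothesis needed for $\beta_{2,k}$ --- is exactly right. If you want to see the details carried out, you should consult \cite{Ghi} directly; the present paper does not reproduce them.
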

We should note that in \cite{Ghi}, the author was also able to prove results pertaining to Reifenberg-flat parametrizations for sets satisfying the conditions of
the above Theorems. \\

More recently, Del Nin and Ibu \cite{DI} obtained the following Theorem.
\begin{theorem}[\cite{DI}]\label{DIthm}
Suppose $E \subseteq \mathbb R^d$ satisfies $\mathcal H^k(E) < \infty$ for some $k \in \{1,...,d-1\}$, and that for $\mathcal H^k$-a.e. $X \in E$ there exists $1 \leq p \leq \infty$ such that
\begin{align}\label{dilim}
\limsup_{r \rightarrow \infty} r^{-\alpha}\beta_p(X,r) < \infty.
\end{align}
Then $E$ is countably $C^{1,\alpha}$-rectifiable of dimension $k$.
\end{theorem}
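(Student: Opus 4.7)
My plan is to reduce to uniform $\beta_p$-control on measurable pieces of $E$, obtain a weak $C^{1,\beta}$-rectifiability as a stepping stone via Ghinassi's Theorem~\ref{Ghithm2}, and then upgrade to the sharp exponent $\alpha$ by a direct geometric comparison of best-approximating planes, once one has enough structure to trade the $L^p$ flatness for pointwise flatness.

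\textbf{Step 1 (Exhaustion).} Since \eqref{dilim} is a pointwise condition and $\mathcal{H}^k(E) < \infty$, a standard Borel exhaustion partitions $E$, up to a $\mathcal{H}^k$-null set, into countably many Borel subsets $F_j$, on each of which $\beta_{p_j}(X,r) \leq M_j r^\alpha$ holds uniformly for $X \in F_j$ and $0 < r < r_j$, with a single value $p_j \in [1,\infty]$. A further refinement, using $\mathcal{H}^k(E) < \infty$, ensures that the upper density $\theta^{k,*}(\mathcal{H}^k|_{F_j})$ is uniformly finite and the lower density is uniformly positive on $F_j$. It suffices to show each such piece $F$ is countably $C^{1,\alpha}$-rectifiable; fix one with parameters $(p, M, r_0)$.

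\textbf{Step 2 (Weak rectifiability via $\beta_\infty$).} For $p = \infty$ nothing is needed. For $p < \infty$, if some $Y \in F \cap B_r(X)$ lay at distance $\delta r$ from the best $L^p$-plane $P(X,r)$, the positive lower density would force a chunk of $\mathcal{H}^k$-mass of order $(\delta r)^k$ into $B_{c\delta r}(Y)$, contributing $\gtrsim \delta^{p}(\delta r)^k$ to the $L^p$-integral defining $\beta_p(X,r)$; combined with $\beta_p(X,r) \leq M r^\alpha$, this yields the pointwise bound $\beta_\infty(X,r) \lesssim r^{\alpha p/(p+k)}$. The exponent here is strictly smaller than $\alpha$, but it already satisfies the hypothesis of Ghinassi's Theorem~\ref{Ghithm2} with exponent $\alpha p/(p+k)$, giving $F$ at least $C^{1,\beta}$-rectifiability for some $\beta>0$. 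In particular, approximate tangent $k$-planes $\pi(X)$ exist $\mathcal{H}^k$-a.e.\ on $F$ and, after a further full-measure refinement, vary continuously.

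\textbf{Step 3 (Sharp H\"older tangents and $C^{1,\alpha}$ graph).} With continuous approximate tangents available, the original $\beta_p$-bound is reinvoked to sharpen the modulus. Fixing $X_0 \in F$ with coordinates so that $\pi(X_0) = \mathbb{R}^k \times \{0\}$, Step 2 implies orthogonal projection onto $\mathbb{R}^k$ is bi-Lipschitz on $F \cap B_\rho(X_0)$ for small $\rho$, realizing $F$ locally as the graph of a function $f$ with $Df(X) = \pi(X)$. For nearby $X,Y \in F$, the bi-Lipschitz graph structure lets one replace the $L^p$-average over $B_{|X-Y|}(X)$ by a pointwise comparison: every point of $F \cap B_{|X-Y|}(X)$ sits within distance $\lesssim M |X-Y|^{1+\alpha}$ of both $\pi(X)$ and $\pi(Y)$, and since this point set spans (by the lower density) a non-degenerate configuration, one obtains $\angle(\pi(X),\pi(Y)) \lesssim |X-Y|^\alpha$. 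A Whitney extension preserving the H\"older modulus then produces a genuine $C^{1,\alpha}$ map $\mathbb{R}^k \to \mathbb{R}^d$ whose image covers $F \cap B_{\rho/2}(X_0)$; countably many such patches exhaust $F$.

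The main obstacle is reaching the \emph{sharp} exponent $\alpha$ rather than any $\alpha' < \alpha$. Ghinassi's square-function argument only delivers $C^{1,\alpha'}$ for $\alpha' < \alpha$, and the naive $\beta_p \to \beta_\infty$ comparison in Step 2 loses $\alpha$ to $\alpha p/(p+k)$. The bootstrap is what saves the sharp exponent: the weak rectifiability of Step 2 is used solely to unlock the graph structure, after which the original hypothesis is applied as a \emph{pointwise}, rather than integrated, flatness condition. Carrying out the pointwise comparison cleanly, with constants that do not swallow the endpoint H\"older exponent, is the delicate technical core of the proof.
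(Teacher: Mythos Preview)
This theorem is not proved in the paper; it is quoted from \cite{DI} as background, and the paper explicitly notes that in \cite{DI} it appears as a corollary of their main results on ``approximate tangent paraboloids.'' There is therefore no proof in the paper to compare your proposal against.

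That said, a brief remark on your outline versus what the paper reports about \cite{DI}: the route in \cite{DI} is apparently to first establish a criterion for $C^{1,\alpha}$-rectifiability in terms of approximate tangent paraboloids (i.e.\ second-order tangential information), and then deduce Theorem~\ref{DIthm} as a consequence. Your proposal instead bootstraps through Ghinassi's Theorem~\ref{Ghithm2} to get a preliminary $C^{1,\beta}$ graph structure and then tries to recover the sharp exponent $\alpha$ by a direct plane-comparison argument. The first two steps are plausible, but Step~3 as written is underspecified at the crucial point: once you are on a bi-Lipschitz graph, the claim that ``every point of $F\cap B_{|X-Y|}(X)$ sits within distance $\lesssim M|X-Y|^{1+\alpha}$ of both $\pi(X)$ and $\pi(Y)$'' does not follow from the $L^p$ bound $\beta_p(X,r)\le Mr^\alpha$ alone --- that is an $L^\infty$-type assertion, and your own Step~2 shows that the naive $\beta_p\to\beta_\infty$ conversion costs a factor of $p/(p+k)$ in the exponent. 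You need an additional argument (and this is presumably where the paraboloid machinery of \cite{DI}, or an equivalent device, enters) to explain why the graph structure lets you recover the full exponent from an $L^p$ hypothesis without loss.
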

The Theorem above is actually stated as a Corollary in \cite{DI}.  The main results of their paper pertain to sufficient conditions for $C^{1,\alpha}$-rectifiability
in terms of ``approximate tangent paraboloids."  

One should note that the square function hypotheses \eqref{ghisqr1} and \eqref{ghisqr2} of Theorems \ref{Ghithm1} and \ref{Ghithm2} imply the condition of Del Nin and Ibu \eqref{dilim}.  So, one could reasonably hypothesize that the square function hypotheses \eqref{ghisqr1} and \eqref{ghisqr2} imply a higher level of regularity than $C^{1,\alpha}$.  Before showing that the hypotheses of Theorems \ref{Ghithm1} and \ref{Ghithm2} imply something stronger than $C^{1,\alpha}$ rectifiability, it seems
necessary to understand what level of regularity quantitative versions \eqref{ghisqr1} and \eqref{ghisqr2} imply when the underlying set is a Lipschitz graph.  In our result we actually prove something more general.  The reader should keep in mind that the ``$\nu_1$'' numbers defined below are, in some sense,
equivalent to the $\beta$-numbers when the underlying function $f$ is Lipschitz.

Given a function $f:\mathbb R^d \rightarrow \mathbb R$, define
\begin{align*}
\nu_0^f(x,r) = \inf_{\text{$c$ a constant}}\left( \fint_{B_r(x)} \left(\frac{|f(y)-c|}{r}\right)^2 \d y \right)^\frac{1}{2},
\end{align*}
and
\begin{align*}
\nu_1^f(x,r) = \inf_{\text{$l$ affine}}\left( \fint_{B_r(x)} \left(\frac{|f(y)-l(y)|}{r}\right)^2 \d y \right)^\frac{1}{2}.
\end{align*}
We prove the following Theorem:
\begin{theorem}\label{mainthm}
A function $f: \mathbb R^d \rightarrow \mathbb R$ is a member of $I_{\alpha}(BMO)$ if and only if, in the case that
$0 < \alpha <1$, we have the following estimate for every $R>0$ and $z \in \mathbb R^d$
\begin{align}\label{nu0}
\int_0^R \int_{B_R(z)} \left(\frac{\nu_0^f(y,r)}{r^{\alpha-1}}\right)^2 \frac{\d y \d r}{r} \leq C R^d.
\end{align}
Additionally, in the case that the case that $1 \leq \alpha <2$, $f \in I_{\alpha}(BMO)$ if and only if $f$ satisfies the following estimate for every $R>0$ and $z \in \mathbb R^d$
\begin{align}\label{nu1}
\int_0^R \int_{B_R(z)} \left(\frac{\nu_1^f(x,r)}{r^{\alpha-1}}\right)^2 \frac{\d x \d r}{r} \leq C R^d,
\end{align}
and the following integral is finite for all $\epsilon > \alpha$
\begin{align}\label{tempg1}
\int \frac{|f(x)|}{1+|x|^{d+\epsilon}} \d x.
\end{align}
Moreover, in both cases, $\|D_\alpha f\|_*^2$, $D_\alpha f$ the $\alpha$\textit{th} fractional derivative of $f$ and $\|\cdot\|_*$ the
$BMO$ norm, is comparable to the constant $C$ in the square function estimates
\eqref{nu0} and \eqref{nu1}.
\end{theorem}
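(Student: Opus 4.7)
My plan is to interpret \eqref{nu0} and \eqref{nu1} as Dorronsoro--Strichartz Carleson measure characterizations of $I_\alpha(BMO)$, and prove them via a comparison with the Poisson extension of $f$. Observe that $r\,\nu_0^f(y,r)$ equals the $L^2$-mean oscillation of $f$ on $B_r(y)$, and $r\,\nu_1^f(x,r)$ equals the $L^2$ affine-approximation error on that ball. Rewriting \eqref{nu0} accordingly, the case $\alpha\in(0,1)$ is precisely the classical mean-oscillation characterization of $I_\alpha(BMO)$ due to Strichartz \cite{Str,Str2}, which can be invoked directly; the norm comparability $\|D_\alpha f\|_*^2 \asymp C$ is part of his result.

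For $\alpha\in[1,2)$, I would pass through the Poisson extension $u(x,t)=P_t*f(x)$. A standard extension characterization states that $f\in I_\alpha(BMO)$ iff $|t^{2-\alpha}\partial_t^2 u(x,t)|^2\,dx\,\tfrac{dt}{t}$ is a Carleson measure on $\mathbb{R}^{d+1}_+$, with Carleson norm comparable to $\|D_\alpha f\|_*^2$. The task then is to show this Carleson condition is equivalent to \eqref{nu1}. The easier direction uses that the convolution kernel defining $t^2\partial_t^2 u$ has at least two vanishing moments: for any affine $\ell$,
\[
\partial_t^2 u(x,t)=t^{-2}\int K(y)\bigl(f(x+ty)-\ell(x+ty)\bigr)\,dy,
\]
yielding the pointwise estimate $|t^2\partial_t^2 u(x,t)|\lesssim t\,\nu_1^f(x,Ct)$ and hence the desired heat-extension Carleson inequality from \eqref{nu1}.

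The reverse direction---bounding $\nu_1^f$ in Carleson sense by the heat-extension derivative---proceeds through a Calder\'on reproducing formula
\[
f-\ell_{x,r}=\int_0^\infty t\,\widetilde\psi_t*\partial_t^2 u(\cdot,t)\,dt \quad \text{modulo affine functions,}
\]
and estimates the contributions from the regimes $t\le r$ (via Poincar\'e-type pairings) and $t>r$ (via tail decay against a two-vanishing-moment kernel $\widetilde\psi_t$). The growth condition \eqref{tempg1} is exactly what is needed to justify this identity and to ensure that $P_t*f$ and $D_\alpha f$ are well-defined modulo polynomials of degree $<\alpha$, a subtle but necessary point when $\alpha\ge 1$.

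I expect the main obstacle to be this reverse direction, because the best-affine approximant $\ell_{x,r}$ depends on both $x$ and $r$, so the polynomial corrections generated by telescoping the reproducing identity across dyadic scales must be shown to contribute nothing in $L^2(B_r(x))$ on average---a delicate quantitative Dorronsoro-type lemma in the BMO setting. An alternative for $\alpha\in(1,2)$ is to reduce to the case $\alpha-1\in(0,1)$ via $\nabla f\in I_{\alpha-1}(BMO)^d$ (valid because Riesz transforms preserve $BMO$), using the Poincar\'e bound $\nu_1^f(x,r)\lesssim r\,\nu_0^{\nabla f}(x,r)$ in one direction; but the reverse comparison of $\nu_0^{\nabla f}$ with $\nu_1^f$ still requires the same Dorronsoro-type input, so this shortcut does not really avoid the main difficulty.
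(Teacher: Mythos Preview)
Your route is viable but genuinely different from the paper's, and in places it is more circuitous.

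For $0<\alpha<1$, note that \eqref{nu0} is \emph{not} literally Strichartz's mean-oscillation criterion: his functional \eqref{stzcrit1} involves first differences $|f(x+y)-f(x)|$, whereas $\nu_0^f$ takes an infimum over constants. One still needs a short lemma showing the optimal constant may be replaced by $f(x)$ (or by a convolution average) without losing the Carleson bound; the paper does this in Lemma~\ref{optimalbetabound} via a Fubini swap. Only after that step does Strichartz's theorem apply.

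For $1\le\alpha<2$, the paper avoids the Poisson extension entirely. For the implication $I_\alpha(BMO)\Rightarrow\eqref{nu1}$ it localizes $f$ by a cutoff $\phi((\cdot-z)/R)$, then controls $\|D_\alpha g\|_{L^2}^2\lesssim\|D_\alpha f\|_*^2 R^d$ using the pointwise representation of $D_\alpha$ and the $T(1)$ theorem applied to the kernel $K(x,y)=\bigl(f(y)-f(x)-\nabla f(x)\cdot(y-x)\bigr)/|x-y|^{d+\alpha}$; a Plancherel/Dorronsoro computation then converts $\|D_\alpha g\|_{L^2}^2$ into the $\nu_1$-square-function. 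For the reverse implication the paper uses a trick that is considerably simpler than your reproducing-formula argument: after replacing the optimizing affine function in $\nu_1^f(x,r)$ by the explicit choice $f(x)+(\phi_r*\nabla f)(x)\cdot y$ (again Lemma~\ref{optimalbetabound}), one symmetrizes $y\mapsto -y$ so that the gradient terms cancel, yielding directly the second-difference integrand $|2f(x)-f(x+y)-f(x-y)|^2/|y|^{d+2\alpha}$ of Strichartz's criterion~\eqref{stzcrit2}. No telescoping across scales or tracking of polynomial corrections is needed.

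Two technical points in your sketch deserve attention. First, the pointwise bound $|t^2\partial_t^2 u(x,t)|\lesssim t\,\nu_1^f(x,Ct)$ is not available as stated, since the Poisson kernel is not compactly supported; you would need a dyadic sum $\sum_k 2^{-kN}\nu_1^f(x,2^k t)$, and then the Carleson bound follows by Schur/Cauchy--Schwarz. Second, the Calder\'on reproducing formula for $f-\ell_{x,r}$ modulo affine functions is exactly the place where the argument becomes heavy in the BMO setting; the paper's $T(1)$ route sidesteps this, trading the reproducing-formula bookkeeping for a single (nontrivial) verification that $T1,T^*1\in BMO$ with norm $\|D_\alpha f\|_*$.
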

\begin{remark}
See Definition \ref{aderivdef} below for the precise definition of $D_\alpha f$.
\end{remark}
As mentioned above, when $f$ is Lipschitz function, one can show that the square function on the left hand side of \eqref{nu1} is equivalent to 
a similar square function involving the $\beta$-numbers.  More precisely, if $\Gamma = \{(x,f(x))\}_{x \in \mathbb R^{d}} \subseteq \mathbb R^{d+1}$ 
is the graph of $f$ and $\Delta_r(X)$ is used to denote the surface ball or radius $r$ of $\Gamma$ centered at $X$,
one has the bound of display \eqref{nu1} if and only if the following square function holds (perhaps with a different constant $C$ on the right hand side) for every
$Z \in \Gamma$ and $R>0$
\begin{align}\label{betaversion}
\int_0^R \int_{\Delta_R(Z)} \left( \frac{\beta_{2,d}(X,r)}{r^{\alpha-1}}\right)^2 \frac{\d \mathcal H^d\vert_{\Gamma}(X) \d r}{r} \leq CR^d.
\end{align}
When $\alpha \in (1,2)$ the display above can be seen as a quantitative version of the square functions \eqref{ghisqr1}, \eqref{ghisqr2}.  Notice that in the display above, 
$\alpha \in (1,2)$, while in displays \eqref{ghisqr1} and \eqref{ghisqr2} $\alpha \in (0,1)$, and hence the power of $r$ in the denominator of the squared quantity 
is the same in all of these square functions.

Theorem \ref{mainthm} above can be seen as an ``endpoint" version of Dorronsoro's theorem.  Dorronsoro's theorem was first proved in \cite{Do}, where the 
author characterized inhomogeneous Sobolev spaces using a Littlewood-Paley decomposition in terms of coefficients like the $\nu_0$ and $\nu_1$ above.  
We should note that our 
coefficients are defined slightly differently to make the connection to the $\beta$ numbers more explicit.  Over the years different methods of proof
of Dorronsoro's theorem have appeared.  In \cite{A}, the author proves an estimate similar to the one proved in \cite{Do}, but in the case that $\alpha=1$ and in terms of homogeneous Sobolev spaces.  In \cite{O}, the author proves Dorronsoro's theorem in the case $\alpha =1$ using geometric methods.  Moreover, in \cite{O}, the 
author proves a parabolic version of Dorronsoro's theorem.  In \cite{HN}, the authors prove Dorronsorro's theorem in the case that $\alpha$ is in integer
using analytic methods.  We note that Proposition \ref{dorronsoro} of this paper is proved using a modification of Azzam's argument given in \cite{A}.

BMO-Sobolev spaces play a central role in the theory of \textit{parabolic uniform rectifiability}.  Let us refer to Lip$(1,1/2)$ graphs (i.e. graphs given by functions which are Lipschitz with respect to the parabolic norm)
\textit{parabolic Lipschitz} graphs, and Lip$(1,1/2)$ graphs with underlying $(1/2)$-time derivative in parabolic BMO \textit{regular parabolic Lipschitz} graphs.  We refer the reader to a recent paper of Dindo\v{s} and S\"atterqvist \cite{DSa} where the authors show that \textit{regular parabolic Lipschitz} functions can be characterized through a parabolic version of the BMO-Sobolev norm developed by Strichartz in \cite{Str}.  In the theory of parabolic uniform rectifiability, \textit{parabolic regular Lipschitz} graphs play a role analogous to that of Lipschitz graphs in the
standard theory of uniform rectifiability.  

In addition to proving Theorem \ref{mainthm} above, this paper provides proofs for some fundamental facts about $BMO$-Sobolev spaces which the author could not find in the existing literature.  In particular, we provide proofs that distributions in $I_{\alpha}(BMO)$ can be identified with functions when $0<\alpha<2$, prove that the initial distributional 
definition of $I_{\alpha}(BMO)$ is well-defined,
and prove that the class of distributions
which $I_{\alpha}(BMO)$ naturally live in 
are the dual space of Schwartz functions with 
vanishing moments of high enough degree.  One
can find all of these proofs in the Appendix.
We should note that \cite{Str} gives a very brief
description of a slightly different way to construct the 
space $I_{\alpha}(BMO)$, but does not provide
any proofs.

\section{Notation, Definitions}

We will consider functions having domain $\mathbb R^d$, endowed with the standard metric.  Given $r>0$ and $x \in \mathbb R^d$, we let 
$$B_r(x) = \{y \in \mathbb R^d: |x-y|<r\}.$$

When denoting function spaces, we will omit any mention of $\mathbb R^d$.  For example, we will use the notation $L^2$, $BMO$, $H^1$ to denote the spaces
$L^2(\mathbb R^d)$, $BMO(\mathbb R^d)$ and $H^1(\mathbb R^d)$ (here $H^1$ is the Hardy space.)

The letters $C$ and $c$ to denote harmless constants
which depend only on the structural parameters of a given hypothesis.  We will use the notation $a \lesssim b$ to mean $a \leq Cb$, $a \gtrsim b$
to mean $a \geq Cb$, and $a \approx b$ to mean that both $a \lesssim b$ and $a \gtrsim b$ hold.  Given a constant $D>0$, we will 
often we will adopt the notation $a \lesssim_D b$, $a \leq CD b$,
for some constant $C$ which is independent of $D$.  $a \approx_D b$ and $a \gtrsim_D b$ are defined analogously.

Of particular interest to us is the space $BMO$.

\begin{definition}
The space $BMO$ is the set of all equivalences classes of functions $f: \mathbb R^d \rightarrow \mathbb R$ endowed with the equivalence
relation which says that $f \sim g$ if $f-g$ is a constant and such that
$$\|f\|_{*} := \sup_{x \in \mathbb R^d, r>0} \fint_{B_r(x)} |f(y) - \fint_{B_r(x)} f(z) \d z| \d y<\infty.$$
We note that the quantity above is unchanged if
one adds a constant to $f$.  We will often use the 
notation
$$\langle f \rangle_{B_r(x)} = \fint_{B_r(z)} f(z) \d z.$$
\end{definition}
We want to define the space $I_{\alpha}(BMO)$.  First, we need to define the $\alpha$\textsuperscript{th} fractional integral of a Schwartz function.  We 
adopt the notation $\mathcal S$ for the Schwartz space on $\mathbb R^d$.
\begin{definition}
Given $\varphi\in \mathcal S$ and $0<\alpha < d$ we define
$$I_{\alpha}(\varphi)(x) = c_{d,\alpha} \int_{\mathbb R^d} \frac{\varphi(y)}{|x-y|^{d-\alpha}} \d y,$$
where $c_{n,\alpha} = 2^{-\alpha} \pi^{-d/2} \Gamma((d-\alpha)/2) \Gamma(\alpha/2)^{-1}$.  Alternatively, for any $\alpha>0$, we can define $I_{\alpha}(\varphi)$ on the
Fourier transform side as
$$I_{\alpha}(\varphi)(x) = \big((2\pi |\xi|)^{-\alpha} \hat \varphi(\xi) \check{\big)}(x),$$
When $0<\alpha<d$, these definitions coincide.
\end{definition}

\begin{definition}\label{aderivdef}
Given $\alpha>0$, we define the $\alpha$\textsuperscript{th} fractional derivative of a function $\varphi \in \mathcal S$ via the multiplier $(2\pi)^\alpha|\xi|^\alpha$, i.e.
$$(D_\alpha \varphi)(x) := (2\pi)^\alpha(|\xi|^\alpha \hat \varphi(\xi)\check)(x).$$
\end{definition}

Observe that for $\varphi \in \mathcal S$ and any $\alpha>0$, $I_{\alpha} D_{\alpha}\varphi = D_{\alpha} I_{\alpha} \varphi = \varphi$.

We let $\mathcal S'$ denote the space of tempered distributions.  Given an integer $l\geq0$ let $P_l$ denote the polynomials of degree at most $l$, and
define $\mathcal S'/P_l$ to be the set of ``tempered distributions modulo polynomials of degree $l$."  By this, we mean the $\mathcal S'$
endowed with the equivalence relation that says $f,g \in \mathcal S'$ are equivalent if $f-g \in P_l$. 

We now define the functions which have dual space $\mathcal S'/P_l$.  
$\mathcal S_{l}$ is the subset $\varphi \in \mathcal S$ such that, for every multi-index $\gamma$ with $|\gamma| \leq l$, one has
$$\int x^\gamma \varphi(x) \d x =0.$$
We endow $\mathcal S_l$ with the topology inherited from $\mathcal S$.

We state the following Proposition, which is proved in the appendix of this paper.

\begin{proposition}\label{Sldual}
The dual space of $\mathcal S_l$ can be identified with $\mathcal S' / P_l$.
\end{proposition}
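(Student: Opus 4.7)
The plan is to identify $\mathcal{S}'/P_l$ with the topological dual $\mathcal{S}_l^*$ by producing an explicit continuous linear projection $\Pi: \mathcal{S} \to \mathcal{S}_l$ and pushing distributions and functionals around with it, thereby bypassing any appeal to Hahn--Banach extension. First I would fix a family $\{\psi_\gamma\}_{|\gamma| \leq l} \subseteq \mathcal{S}$ dual to the moment functionals, i.e.,
\[
\int_{\mathbb R^d} x^\beta \psi_\gamma(x) \, \d x = \delta_{\beta \gamma}, \qquad |\beta|, |\gamma| \leq l.
\]
Such a family is obtained by taking the linearly independent set $\{x^\gamma e^{-|x|^2}\}_{|\gamma| \leq l}$ in $\mathcal{S}$ and inverting the Gram matrix $M_{\beta\gamma} = \int x^{\beta+\gamma} e^{-|x|^2} \, \d x$, which is symmetric positive-definite (for any nonzero coefficient vector, the associated quadratic form equals $\int Q(x)^2 e^{-|x|^2} \d x > 0$). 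Writing $c_\gamma(\varphi) := \int x^\gamma \varphi(x) \, \d x$, each $c_\gamma$ is continuous on $\mathcal{S}$ since it is dominated by the Schwartz seminorm $\sup_x (1+|x|)^{d+1+|\gamma|} |\varphi(x)|$, so the formula
\[
\Pi \varphi := \varphi - \sum_{|\gamma| \leq l} c_\gamma(\varphi) \, \psi_\gamma
\]
defines a continuous linear projection of $\mathcal{S}$ onto $\mathcal{S}_l$ which is the identity on $\mathcal{S}_l$.

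Next I would observe that restriction $T \mapsto T|_{\mathcal{S}_l}$ sends $\mathcal{S}'$ continuously into $\mathcal{S}_l^*$ and kills $P_l$, since any $P \in P_l$ acts on $\varphi \in \mathcal{S}_l$ by $\int P\varphi = \sum_{|\gamma| \leq l} a_\gamma \int x^\gamma \varphi(x) \, \d x = 0$. Thus restriction descends to a well-defined continuous map $\bar{R}: \mathcal{S}'/P_l \to \mathcal{S}_l^*$, and the remaining task is to show $\bar R$ is a bijection.

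For injectivity, if $T \in \mathcal{S}'$ vanishes on $\mathcal{S}_l$, then $\langle T, \Pi \varphi \rangle = 0$ for every $\varphi \in \mathcal{S}$, so
\[
\langle T, \varphi \rangle = \langle T, \varphi - \Pi \varphi \rangle = \sum_{|\gamma| \leq l} c_\gamma(\varphi) \, \langle T, \psi_\gamma \rangle = \int_{\mathbb R^d} P(x) \varphi(x) \, \d x,
\]
where $P(x) := \sum_{|\gamma| \leq l} \langle T, \psi_\gamma \rangle \, x^\gamma \in P_l$, so $T \in P_l$. For surjectivity, given $L \in \mathcal{S}_l^*$, the composition $\tilde L := L \circ \Pi$ is continuous on $\mathcal{S}$ (composition of continuous maps), hence lies in $\mathcal{S}'$, and restricts to $L$ on $\mathcal{S}_l$ since $\Pi$ is the identity there.

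The only genuinely nontrivial input is the construction of the dual family $\{\psi_\gamma\}$; once that finite-dimensional linear-algebra computation is in place, every subsequent step reduces to a formal manipulation with the projection $\Pi$. In particular, because $\Pi$ provides a concrete continuous extension from $\mathcal{S}_l$ to $\mathcal{S}$, the argument sidesteps the usual Hahn--Banach subtleties associated with extending functionals off of closed subspaces of Fréchet spaces, and the isomorphism $\mathcal{S}_l^* \cong \mathcal{S}'/P_l$ follows cleanly.
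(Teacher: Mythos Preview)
Your argument is correct and takes a genuinely different route from the paper's. The paper proves injectivity of the restriction map $S:\mathcal S'\to(\mathcal S_l)^*$ by Fourier analysis (if $u$ vanishes on $\mathcal S_l$ then $\hat u$ is supported at the origin, hence $u$ is a polynomial) followed by an ad hoc construction to bound the degree, and then obtains surjectivity by invoking Hahn--Banach to extend a functional from $\mathcal S_l$ to $\mathcal S$. Your approach replaces both of these steps with a single device: the explicit continuous projection $\Pi:\mathcal S\to\mathcal S_l$ built from the biorthogonal family $\{\psi_\gamma\}$. This makes the argument more elementary and entirely constructive---no Fourier transform, no Hahn--Banach, and no appeal to the structure theorem for distributions supported at a point---and it yields the kernel computation and the extension simultaneously. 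The paper's Fourier-side identification of the kernel is perhaps more conceptual, but your projection method is cleaner and, as you note, avoids any worry about extending functionals off closed subspaces of Fr\'echet spaces.
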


We are now ready to define the space $I_{\alpha}(BMO)$.

\begin{definition}\label{ialphabmodef}
Given $\alpha>0$, $I_{\alpha}(BMO)$ is the subset of $\mathcal S'/P_{\lfloor \alpha \rfloor}$ consisting of distributions $I_{\alpha}(b)$, $b \in BMO$ 
defined via the following action on $\varphi \in \mathcal S_{\lfloor \alpha \rfloor}$
$$I_{\alpha}(b)(\varphi) := \int b(x)I_{\alpha}(\varphi)(x) \d x.$$
\end{definition}
\begin{remark}
In the appendix we prove that the integral in the definition of $I_{\alpha}(b)$ above is well-defined.
\end{remark}

As mentioned above, the spaces $I_{\alpha}(BMO)$ coincide with functions modulo polynomials (constants in the case that $0<\alpha<1$
and linear functions when $1 \leq \alpha <2$.) We will often abuse language and simply refer to ``functions in $I_{\alpha}(BMO)$."  The following Proposition is proved in the appendix.

\begin{proposition}\label{distcoincide}
For $0<\alpha<1$, members of $I_{\alpha}(BMO)$ coincide with functions modulo constants.  In other words, given a tempered distribution modulo constants $f \in I_{\alpha}(BMO)$, we can identify $f$ with a function modulo
constants.  If $1 \leq \alpha <2$, members of $I_{\alpha}(BMO)$ coincide with functions modulo linear polynomials.
\end{proposition}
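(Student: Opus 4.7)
The plan is to produce, for each $b \in BMO$, an explicit locally integrable function $\widetilde{I}_\alpha b$ on $\mathbb{R}^d$ and show that, as a distribution on $\mathcal{S}_{\lfloor\alpha\rfloor}$, it agrees with $I_\alpha(b)$ from Definition \ref{ialphabmodef}. The construction is the classical ``subtraction'' definition of the Riesz potential on $BMO$, adjusted for the range of $\alpha$. For $0<\alpha<1$, I would set
\begin{equation*}
\widetilde{I}_\alpha b(x) = c_{d,\alpha}\int_{\mathbb{R}^d} b(y)\Bigl[\frac{1}{|x-y|^{d-\alpha}}-\frac{\chi_{|y|>1}}{|y|^{d-\alpha}}\Bigr]\d y,
\end{equation*}
while for $1\le \alpha<2$ I would add a first-order Taylor correction:
\begin{equation*}
\widetilde{I}_\alpha b(x)=c_{d,\alpha}\int_{\mathbb{R}^d} b(y)\Bigl[\frac{1}{|x-y|^{d-\alpha}}-\frac{\chi_{|y|>1}}{|y|^{d-\alpha}}-\chi_{|y|>1}\,x\cdot\nabla_x\frac{1}{|x-y|^{d-\alpha}}\Big|_{x=0}\Bigr]\d y.
\end{equation*}

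First I would verify that the integral defining $\widetilde{I}_\alpha b(x)$ converges absolutely for every $x\in\mathbb{R}^d$. Splitting into $|y|\le 2(1+|x|)$ and $|y|>2(1+|x|)$, the near part is handled by the standard estimate $\int_{B_R}|b|\lesssim R^d(\|b\|_\ast \log R+\text{const})$ together with local integrability of the Riesz kernel. For the far part, a Taylor expansion of $|x-y|^{-(d-\alpha)}$ about $x=0$ shows that the bracketed kernel is $O(|y|^{-(d-\alpha)-1})$ in the first case and $O(|y|^{-(d-\alpha)-2})$ in the second; combined with the logarithmic growth of $BMO$ functions this yields absolute convergence precisely because $\alpha<1$ (respectively $\alpha<2$). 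The same bounds prove that $\widetilde{I}_\alpha b$ is locally integrable.

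Next I would pair $\widetilde{I}_\alpha b$ against $\varphi\in\mathcal{S}_{\lfloor\alpha\rfloor}$ and apply Fubini. The same decay estimates justify interchanging the order of integration. The main term produces $\int b(y)\,I_\alpha(\varphi)(y)\,\d y$, which is exactly $I_\alpha(b)(\varphi)$. The subtracted terms involve, as functions of $x$, either the constant $1$ (when $\alpha\in(0,1)$) or the constant $1$ and the coordinate functions $x_j$ (when $\alpha\in[1,2)$); pairing them against $\varphi\in\mathcal{S}_{\lfloor\alpha\rfloor}$ gives zero by definition of $\mathcal{S}_{\lfloor\alpha\rfloor}$. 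Hence $\widetilde{I}_\alpha b = I_\alpha(b)$ as distributions modulo $P_{\lfloor\alpha\rfloor}$, and by Proposition \ref{Sldual} this identification takes place in $\mathcal{S}'/P_{\lfloor\alpha\rfloor}$. Finally, any two choices of subtraction (e.g.\ replacing $\chi_{|y|>1}$ by $\chi_{|y|>R}$ or altering the expansion point) differ by a polynomial in $x$ of degree at most $\lfloor\alpha\rfloor$, so the associated function is well-defined modulo $P_{\lfloor\alpha\rfloor}$.

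The main obstacle I anticipate is the convergence/Fubini step when $1\le\alpha<2$: here the cancellation needed to control $\int b(y)\,K(x,y)\,\d y$ is a genuine first-order Taylor cancellation, and one must combine sharp estimates on the remainder $|x-y|^{-(d-\alpha)}-|y|^{-(d-\alpha)}-x\cdot\nabla|y|^{-(d-\alpha)}$ with the $BMO$ growth $\fint_{B_R}|b|\lesssim \|b\|_\ast\log(2+R)+|\langle b\rangle_{B_1}|$ and show the double integral is finite for every $\varphi\in \mathcal{S}_1$, uniformly in reasonable senses. Once this Fubini is in hand, the rest of the argument is a clean matching of definitions.
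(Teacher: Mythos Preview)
Your proposal is correct. For the range $0<\alpha<1$ it is essentially the paper's argument: the paper subtracts $|y|^{-(d-\alpha)}$ (without your cutoff $\chi_{|y|>1}$, which only changes things by a constant) and justifies the Fubini step via a truncation $b_k$ of $b$ together with the fact that $\||\cdot-y|^{\alpha-d}-|\cdot|^{\alpha-d}\|_{H^1}\approx |y|^\alpha$ and $H^1$--$BMO$ duality, rather than your direct absolute-convergence estimate. Either device works.

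For $1\le\alpha<2$ the two arguments are genuinely different. The paper does \emph{not} introduce a first-order Taylor subtraction in the kernel. Instead it first shows, distributionally, that $\partial_j I_\alpha(b)=-I_{\alpha-1}(\mathcal R_j b)$ with $\mathcal R_j b\in BMO$; then it invokes the already-proved case $0<\alpha-1<1$ (or $\alpha-1=0$) to realize each $\partial_j f$ as a function, and finally recovers $f$ by integrating along rays, $\tilde f(x)=\int_0^1 \nabla f(tx)\cdot x\,dt$, checking on the Fourier side that this coincides with $I_\alpha(b)$ in $\mathcal S'/P_1$. Your route is more direct and uniform across the two ranges, and yields an explicit integral formula for the representing function in one stroke; the paper's route is a reduction that avoids redoing the kernel analysis but has the side benefit of simultaneously establishing $\nabla f\in I_{\alpha-1}(BMO)$, which feeds into the H\"older regularity statements used elsewhere. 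One small point worth making explicit in your write-up: after Fubini you do not split off the divergent integrals $\int_{|y|>1} b(y)|y|^{-(d-\alpha)}\,dy$ etc.\ separately, but rather observe that for each fixed $y$ the inner $x$-integral of the subtracted terms vanishes because $\varphi\in\mathcal S_{\lfloor\alpha\rfloor}$; the absolute convergence of the full double integral (which you have) is what licenses that manipulation.
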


Given $f :\mathbb R^d \rightarrow
\mathbb R$ will adopt the following
notation for the $C^\alpha$ norm of $f$, 
$0<\alpha<1$
$$[f]_\alpha = \sup_{x \neq y}
\frac{|f(x)-f(y)|}{|x-y|^\alpha}.$$

We will need the following Proposition about the continuity properties
of $I_{\alpha}(BMO)$ for $0<\alpha<2$, which is proved in the paper of Strichartz \cite{Str}.
\begin{proposition}\label{iaimpholder}
For $0<\alpha<1$, (equivalence classes of) functions in $I_{\alpha}(BMO)$ are H\"older continuous with exponent $\alpha$.  For $1 < \alpha < 2$, (equivalence classes of)
functions in $I_{\alpha}(BMO)$ are differentiable with $(\alpha-1)$-H\"older continuous gradients.  If $\alpha = 1$, (equivalence classes of) functions in $I_{\alpha}(BMO)$
are differentiable almost everywhere with gradients in $BMO$.  In particular, $[f]_{\alpha} \lesssim \|D_\alpha f\|_*$ for $\alpha \in (0,1)$, and $[\nabla f]_{\alpha-1} \lesssim
\|D_\alpha f\|_*$ for $1 < \alpha <2$.
\end{proposition}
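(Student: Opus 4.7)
The plan is to treat the three ranges of $\alpha$ in sequence, with the case $\alpha \in (0,1)$ being the core Campanato-type estimate and the other two cases following by Riesz transform reductions. Throughout, write $f = I_\alpha b$ with $b = D_\alpha f \in BMO$, since $I_\alpha$ and $D_\alpha$ are inverse to one another on the relevant spaces.

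For $\alpha \in (0,1)$, fix $x \neq y$, set $r = |x - y|$, and let $B = B_{4r}((x+y)/2)$. Starting from the kernel representation I would write, modulo a constant,
\[
f(x) - f(y) \;=\; c_{d,\alpha} \int \bigl( |x-z|^{\alpha - d} - |y-z|^{\alpha - d}\bigr)(b(z) - b_B)\, dz,
\]
where inserting the constant $b_B$ is legitimate because we work modulo constants. Then split the integral into the near part $z \in 2B$ and the far part $z \in (2B)^c$. On the near part both kernels are locally integrable since $\alpha < d$, and $\fint_{2B}|b - b_B| \lesssim \|b\|_*$ by John--Nirenberg, giving a contribution of order $r^\alpha \|b\|_*$. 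On the far part apply the mean-value bound
\[
\bigl||x-z|^{\alpha - d} - |y-z|^{\alpha - d}\bigr| \;\lesssim\; \frac{r}{|x-z|^{d - \alpha + 1}},
\]
dyadically decompose into annuli $2^{k+1}B \setminus 2^k B$, and use $|b_{2^k B} - b_B| \lesssim k \|b\|_*$. The resulting sum is geometric in $(2^k r)^{\alpha - 1}$ since $\alpha < 1$, producing a bound of order $r^\alpha \|b\|_*$, and yielding $[f]_\alpha \lesssim \|D_\alpha f\|_*$.

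For $\alpha = 1$, on the Fourier side $\partial_j I_1$ coincides up to a constant with the $j$th Riesz transform $R_j$, which is bounded $BMO \to BMO$. Hence $\nabla f = c R b$ with $\|\nabla f\|_* \lesssim \|b\|_*$. Since $BMO$ functions are locally $L^p$ for every $p < \infty$, this places $f \in W^{1,p}_{\mathrm{loc}}$ for every $p$, and standard Sobolev differentiability gives that $f$ is differentiable almost everywhere. For $\alpha \in (1,2)$, write $\alpha = 1 + \alpha'$ with $\alpha' \in (0,1)$ and factor $\partial_j I_\alpha b = c I_{\alpha'}(R_j b)$. Since $R_j b \in BMO$ with $\|R_j b\|_* \lesssim \|b\|_*$, the already-established $\alpha \in (0,1)$ case applied to $I_{\alpha'}(R_j b)$ yields $[\partial_j f]_{\alpha - 1} \lesssim \|R_j b\|_* \lesssim \|D_\alpha f\|_*$, as required.

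The main obstacle will be the $\alpha \in (0,1)$ case: $I_\alpha b$ does not represent as an absolutely convergent integral when $b$ is merely $BMO$, so the first step — replacing the distributional pairing from Definition \ref{ialphabmodef} by the kernel integral against $(b - b_B)$ — must be justified rigorously, for instance by mollifying $b$, running the estimates for the mollifications, and passing to the limit using that the bounds are uniform in the mollification parameter. Once that identification is in place, the near/far decomposition is a standard Riesz potential calculation; the other two cases are essentially formal once the $R_j$-boundedness on $BMO$ is invoked.
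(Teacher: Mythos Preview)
The paper does not supply its own proof of this proposition; it simply cites Strichartz \cite{Str}. So there is no ``paper's proof'' to compare against in the strict sense. That said, your outline is essentially the standard argument and is correct in structure. A few comments:

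For $0<\alpha<1$, your near/far splitting is fine, but the near-part bookkeeping is a little loose: the bound $\fint_{2B}|b-b_B|\lesssim\|b\|_*$ alone does not control $\int_{2B}|x-z|^{\alpha-d}|b(z)-b_B|\,dz$, since the kernel is not bounded on $2B$. You need H\"older with some $p'<d/(d-\alpha)$ together with John--Nirenberg for $\|b-b_B\|_{L^p(2B)}$; the exponents then combine to give exactly $r^\alpha\|b\|_*$. Alternatively (and this is the route implicit in the paper's appendix and in \cite{Str}), one can bypass the near/far split entirely by observing that $|x-\cdot|^{\alpha-d}-|y-\cdot|^{\alpha-d}\in H^1$ with $H^1$ norm $\approx|x-y|^\alpha$, so $|f(x)-f(y)|\le\|b\|_*\,\||x-\cdot|^{\alpha-d}-|y-\cdot|^{\alpha-d}\|_{H^1}\approx|x-y|^\alpha\|b\|_*$ by duality. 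Your decomposition is effectively a hands-on proof of this pairing bound. As you note, the justification that $f(x)-f(y)$ really equals the kernel integral against $b$ is handled in the paper's appendix (Proposition \ref{distcoincide}), so you can quote that rather than mollify.

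For $\alpha\ge 1$, your Riesz-transform reduction $\partial_j I_\alpha = c\,I_{\alpha-1}R_j$ is exactly what the paper's appendix establishes (in the course of proving Proposition \ref{distcoincide}) when it shows $\partial_i f$ coincides with $I_{\alpha-1}(-\tilde b_i)$ for $\tilde b_i=\mathcal R_i b\in BMO$. So your reduction to the $(0,1)$ case, and the $\alpha=1$ case via $W^{1,p}_{\mathrm{loc}}$ differentiability, are both sound and align with the paper's framework.
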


\section{$I_\alpha(BMO)$ Implies Square Function Bounds}
In this section we show that functions in $I_{\alpha}(BMO)$ necessarily satisfy the square function bounds \eqref{nu0} and \eqref{nu1} for $0 < \alpha <1$ and $1 \leq \alpha <2$, 
respectively.  We begin by showing that localizations of $I_{\alpha}(BMO)$ functions satisfy appropriate bounds which comport with the scale of the localizations.  As we shall see,
the case when $\alpha = 1$ is fundamentally different.  We should note that Propositions 
\ref{smoothsio1} and \ref{smoothsio2} are somewhat
similar to a Lemma proved in the appendix of \cite{H2}, although our case is somewhat more difficult because we need to analyze the singular integral
which makes an appearance in the final term of 
our estimate in the case that $1 \leq \alpha < 2$.

\begin{proposition}\label{smoothsio1}
Suppose $f \in C^\infty$, $1 < \alpha <2$.  Let $\phi:\mathbb R^d \rightarrow \mathbb R$ be smooth, equal to $1$ on $B_2(0)$, equal to $0$
outside of $B_4(0)$, and such that $0 \leq \phi \leq 1$ everywhere.  Given $z \in \mathbb R^d$, $R>0$, define
\begin{align*}
g(x):=\phi\left(\frac{x-z}{R}\right)\big(f(x)-f(z) - \nabla f(z) \cdot(x-z)\big).
\end{align*}
Then we have the following bound
\begin{align}
\int_{\mathbb R^d} \big||\xi|^\alpha \hat g(\xi) \big|^2 \d \xi \leq C R^d.
\end{align}
Where $C = C(d,\alpha)\|D_{\alpha}f\|_*^2$.
\end{proposition}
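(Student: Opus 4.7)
The plan is to reduce to the range $\alpha-1 \in (0,1)$, where the companion Proposition \ref{smoothsio2} applies. By Plancherel,
\[
\int_{\mathbb{R}^d} \bigl||\xi|^\alpha \hat g(\xi)\bigr|^2 \d\xi = (2\pi)^{-2\alpha} \|D_\alpha g\|_{L^2}^2,
\]
so it suffices to show $\|D_\alpha g\|_{L^2}^2 \lesssim R^d \|D_\alpha f\|_*^2$. The key identity is the Fourier multiplier factorization $D_\alpha = \sum_{j=1}^d R_j\, D_{\alpha-1}\, \partial_j$, where $R_j$ are the Riesz transforms, obtained from the identity $(2\pi|\xi|)^\alpha = (2\pi|\xi|)^{\alpha-1}\sum_j (-i\xi_j/|\xi|)(2\pi i \xi_j)$. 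Since the $R_j$ are bounded on $L^2$, this yields
\[
\|D_\alpha g\|_{L^2} \leq \sum_{j=1}^d \|D_{\alpha-1}(\partial_j g)\|_{L^2},
\]
reducing matters to an $R^{d/2}\|D_\alpha f\|_*$ bound on each $\|D_{\alpha-1}(\partial_j g)\|_{L^2}$.

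Setting $\phi_R(\cdot) := \phi((\cdot-z)/R)$ and $F_j := \partial_j f$, I would split $\partial_j g = \phi_R(F_j - F_j(z)) + (\partial_j \phi_R)\, h$. The first piece is exactly the object appearing in Proposition \ref{smoothsio2} applied to $F_j$ at exponent $\alpha-1 \in (0,1)$. The required hypothesis $F_j \in I_{\alpha-1}(BMO)$ with $\|D_{\alpha-1}F_j\|_* \lesssim \|D_\alpha f\|_*$ follows from the Fourier identity $D_{\alpha-1}\partial_j f = \pm R_j D_\alpha f$ together with the boundedness of Riesz transforms on $BMO$. Thus Proposition \ref{smoothsio2} gives
\[
\|D_{\alpha-1}(\phi_R(F_j - F_j(z)))\|_{L^2}^2 \lesssim R^d \|D_{\alpha-1}F_j\|_*^2 \lesssim R^d\|D_\alpha f\|_*^2.
\]

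For the remaining piece $u_j := (\partial_j \phi_R)\, h$, I would use the pointwise bounds from Proposition \ref{iaimpholder}, namely $|h(x)| \lesssim \|D_\alpha f\|_* |x-z|^\alpha$ and $|\nabla h(x)| \lesssim \|D_\alpha f\|_* |x-z|^{\alpha-1}$, together with $|\nabla^k \phi_R| \lesssim R^{-k}$ on its annular support, to derive
\[
\|u_j\|_{L^2} \lesssim R^{\alpha-1+d/2}\|D_\alpha f\|_*, \qquad \|\nabla u_j\|_{L^2} \lesssim R^{\alpha-2+d/2}\|D_\alpha f\|_*.
\]
The standard fractional Sobolev interpolation $\|u_j\|_{\dot H^{\alpha-1}} \leq \|u_j\|_{L^2}^{2-\alpha}\|\nabla u_j\|_{L^2}^{\alpha-1}$ (valid since $\alpha-1 \in (0,1)$) then gives $\|D_{\alpha-1}u_j\|_{L^2} \lesssim R^{d/2}\|D_\alpha f\|_*$, because the resulting exponent of $R$ is $(\alpha-1)(2-\alpha) + (\alpha-2)(\alpha-1) + d/2 = d/2$.

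The main obstacle is the handling of $u_j$. A more direct fractional Leibniz expansion of the form $D_\alpha g = \phi_R D_\alpha f + h\, D_\alpha \phi_R - T(\phi_R, h)$ does produce a singular-integral commutator $T$ (which one can bound), but it also leaves the term $\phi_R D_\alpha f$, whose $L^2$ norm over $B_{4R}(z)$ is not controlled by $\|D_\alpha f\|_*$ alone --- the mean of $D_\alpha f$ on the ball is not constrained by the $BMO$ norm. Factoring through the Riesz transforms first redistributes one derivative onto $\phi_R$, so $D_\alpha f$ appears only inside $R_j D_\alpha f$ (which feeds the $(0,1)$ case via Proposition \ref{smoothsio2}), while the remaining piece $u_j$ is of ``pure bump times bounded function" type and so is amenable to the interpolation step above.
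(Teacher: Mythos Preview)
Your argument is correct and takes a genuinely different route from the paper. The paper works directly with the singular-integral representation $D_\alpha g(x) \approx \mathrm{p.v.}\int \frac{g(x)-g(y)}{|x-y|^{d+\alpha}}\,\d y$, splits into near/far regions, expands $g(x)-g(y)$ via the product rule, and ultimately reduces the main term to the $L^2$-boundedness of the operator with kernel $K(x,y)=\frac{f(y)-f(x)-\nabla f(x)\cdot(y-x)}{|x-y|^{d+\alpha}}$, which is established by verifying the hypotheses of the $T(1)$ theorem (kernel bounds from $[\nabla f]_{\alpha-1}$, $T(1)=D_\alpha f\in BMO$, weak boundedness). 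Your approach instead factors $\|D_\alpha g\|_{L^2}^2=\sum_j\|D_{\alpha-1}\partial_j g\|_{L^2}^2$, applies the companion $(0,1)$-range Proposition~\ref{smoothsio2} to $\phi_R(\partial_j f-\partial_j f(z))$ (using $D_{\alpha-1}\partial_j f=cR_jD_\alpha f\in BMO$), and disposes of the cross term $(\partial_j\phi_R)h$ by the elementary interpolation $\|u\|_{\dot H^{\alpha-1}}\le\|u\|_{L^2}^{2-\alpha}\|\nabla u\|_{L^2}^{\alpha-1}$ together with the pointwise H\"older bounds on $h$ and $\nabla h$. Your route is shorter and avoids a second $T(1)$ verification, at the cost of reversing the paper's logical order: you must prove Proposition~\ref{smoothsio2} first (which is legitimate, since its proof is self-contained and does not invoke Proposition~\ref{smoothsio1}). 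The paper's approach, by contrast, treats the $1<\alpha<2$ case on its own footing, making explicit the Calder\'on-commutator structure of the problem and yielding the two propositions as parallel, independent results.
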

\begin{proof}
Recall that, for $\alpha \in (1,2)$,  $[\nabla f]_{\alpha-1} \lesssim \| D_{\alpha}\|_*$.  Over the course of this proof, we will bound terms on the order of $[\nabla f]_{\alpha-1}^2R^d$, which is dominated by 
$\|D_\alpha f\|_*^2 R^d$.

We have
\begin{align*}
 \int_{\mathbb R^d} \left(\hat g (\xi)|\xi|^{\alpha}\right)^2 \d \xi \approx \int_{\mathbb R^d} \left( p.v. \int_{\mathbb R^d} \frac{g(x)-g(y)}{|x-y|^{d+\alpha}} \d y \right)^2 \d x .
\end{align*}
The display above uses standard facts about integral representation of the fractional Laplacian for $\alpha$ in the range $0 < \alpha <2$.
We bound expression on the right hand side above up to 
a harmless constant as follows
\begin{align*}
\int_{|x-z| \geq 100 R} \left(p.v. \int_{\mathbb R^d} \frac{g(x)-g(y)}{|x-y|^{d+\alpha}} \d y\right)^2 \d x 
+ \int_{|x-z| <100 R} \left(p.v. \int_{\mathbb R^d} \frac{g(x)-g(y)}{|x-y|^{d+\alpha}} \d y\right)^2 \d x = I + II.
\end{align*}
We estimate term $I$ using the fact that $g(x)=0$ for all $x$ in appearing in the inner integral, the fact that $|f(y)-f(z) - \nabla f(z) \cdot (y-z)|
\lesssim_{[\nabla f]_{\alpha-1}} R^{\alpha}$, and the fact that $|x-y| \gtrsim |x-z|$ for all $x,y,z$ in the inner integral for which the integrand is non-zero.
\begin{multline}\label{termIest}
I = \int_{|x-z| \geq 100 R} \left( p.v. \int_{\mathbb R^d} \frac{g(y)}{|x-y|^{d+\alpha}} \d y \right)^2 \d x\\
= \int_{|x-z| \geq 100 R} \left( p.v. \int_{\mathbb R^d} \frac{\phi\left(\frac{y-z}{R}\right)\big(f(y)-f(z)-\nabla f(z) \cdot (y-z)\big)}{|x-y|^{d+\alpha}} \d y \right)^2 \d x\\
\lesssim_{[\nabla f]_{\alpha-1}^2} R^{2\alpha}\int_{|x-z| \geq 100R} \left( \int_{|y-z| \leq 4R} \frac{1}{|x-y|^{d+\alpha}} \d y \right)^2 \d x\\
\lesssim R^{2d+2\alpha} \int_{|x-z| \geq 100R} \frac{1}{|x-z|^{2d+2\alpha}} \d x \approx R^d.
\end{multline}
So, $I \leq C[\nabla f]_{\alpha-1}^2 R^d$.  \\

Now we turn our attention to term $II$.  We write
\begin{multline*}
g(x) -g(y) = \left(\phi\left(\frac{x-z}{R} \right)- \phi\left(\frac{y-z}{R} \right) \right)\left(f(x)-f(z) - \nabla f(z) \cdot (x-z)\right) 
\\ - \phi\left(\frac{y-z}{R}\right)\left(f(y)-f(x) - \nabla f(z) \cdot (y-x)\right).
\end{multline*}
We use this decomposition to split term II
\begin{multline*}
II \lesssim \int_{|x-z| < 100 R} \left(p.v. \int_{|x-y| \leq 10 R}\frac{f(x)-f(z)- \nabla f(z) \cdot (x-z)}{|x-y|^{d+\alpha}} 
\left(\phi\left(\frac{x-z}{R} \right)- \phi\left(\frac{y-z}{R} \right) \right)\d y \right)^2 \d x\\
+\int_{|x-z| <100 R} \left( \int_{|x-y| \geq 10 R}\frac{f(x)-f(z)- \nabla f(z) \cdot (x-z)}{|x-y|^{d+\alpha}} 
\left(\phi\left(\frac{x-z}{R} \right)- \phi\left(\frac{y-z}{R} \right) \right)\d y \right)^2 \d x\\
+\int_{|x-z| <100 R} \left(p.v. \int_{\mathbb R^d}\frac{f(y)-f(x)-\nabla f(z)\cdot (y-x)}{|x-y|^{d+\alpha}}\phi\left(\frac{y-z}{R}\right) \d y \right)^2 \d x 
\\
=: III + IV+ V.
\end{multline*}
We first estimate term $III$.  We first record the estimates
$$|f(x)-f(z) -\nabla f(z) \cdot (x-z)| \lesssim_{[\nabla f]_{\alpha-1}} R^{\alpha},$$
$$\left|\phi\left(\frac{y-z}{R} \right) - \phi\left(\frac{x-z}{R} \right)-   \nabla \left(\phi\left(\frac{x-z}{R} \right)\right) 
\cdot (y-x)\right|\lesssim \frac{|x-y|^2}{R^2}.$$
and write term $III$ as (using that the added term involving the gradient will integrate to $0$)
\begin{multline*}\label{psitaylor}
\int_{|x-z| <100 R} \bigg(p.v. \int_{|x-y| \leq 10R}\frac{f(x)-f(z) - \nabla f(z) \cdot (x-z)}{|x-y|^{d+\alpha}}\\
 \cdot \left(\phi\left(\frac{x-z}{R} \right) + \nabla \left(\phi\left(\frac{x-z}{R} \right)\right) 
\cdot (y-x) - \phi\left(\frac{y-z}{R} \right) \right)\d y \bigg)^2 \d x.
\end{multline*}
Using the estimates we just mentioned above, we see that term $III$ is bounded on the order of
\begin{align*}
[\nabla f]_{\alpha-1}^2\int_{|x-z| < 100R} R^{2\alpha-4} \left(\int_{|x-y| \leq 10R} \frac{1}{|x-y|^{d+\alpha-2}} \d y \right)^2 \d x \lesssim [\nabla f]_{\alpha-1}^2R^d.
\end{align*}
For term $IV$ we use the estimate on $|f(x)-f(z)-\nabla f(z) \cdot (x-z)|$ above, along with
$$\left| \phi\left(\frac{x-z}{R} \right)- \phi\left(\frac{y-z}{R} \right)\right| \lesssim \frac{|x-y|}{R}.$$
So, we have
$$IV \lesssim_{[\nabla f]_{\alpha-1}^2} \int_{|x-z| < 100R} R^{2\alpha-2} \left(\int_{|x-y| \geq 10R} \frac{1}{|x-y|^{d+\alpha-1}} \d y \right)^2 \d x \lesssim R^d.$$

We now decompose term $V$
\begin{multline*}
V \lesssim \int_{|x-z| < 100R} \left( \int_{|x-y| \geq 10 R} \frac{(\nabla f(z)-\nabla f(x)) \cdot (x-y)}{|x-y|^{d + \alpha}} \phi\left(\frac{y-z}{R}\right)
\d y \right)^2 \d x \\
+ \int_{|x-z| < 100R} \left( p.v. \int_{|x-y| \leq 10R} \frac{(\nabla f(z)-\nabla f(x)) \cdot (x-y)}{|x-y|^{d+\alpha}}\phi\left(\frac{y-z}{R}\right)\d y \right)^2 \d x \\
+\int_{|x-z| < 100R} \left( p.v. \int_{\mathbb R^d} \frac{f(y)-f(x) - \nabla f(x) \cdot(y-x)}{|x-y|^{d + \alpha}} 
\phi\left(\frac{y-z}{R}\right) \d y \right)^2 \d x \\
=: VI + VII + VIII.
\end{multline*}
For term $VI$ we use the estimate 
$|\nabla f(z)-\nabla f(x)| \lesssim_{[\nabla f]_{\alpha-1}} |z-x|^{\alpha-1}\lesssim R^{\alpha-1}$ for any $x$ and $z$ in the integrand.  So
\begin{multline*}
VI = \int_{|x-z| < 100R} \left( \int_{|x-y| \geq 10 R} \frac{(\nabla f(z)-\nabla f(x)) \cdot (x-y)}{|x-y|^{d + \alpha}} 
\phi\left(\frac{y-z}{R}\right) \d y \right)^2 \d x \\
\lesssim_{[\nabla f]_{\alpha-1}^2} \int_{|x-z| < 100R} \left(\int_{|x-y| \geq 10R} \frac{R^{\alpha-1}}{|x-y|^{d+\alpha-1}} \phi\left(\frac{y-z}{R}\right)  \d y \right)^2 \d x\\
\approx_{[\nabla f]_{\alpha-1}^2} R^{2\alpha-2} \int_{|x-z| <100R} \left( \int_{CR}^\infty r^{-\alpha} \d r \right)^2 \d x \lesssim R^d.
\end{multline*}
And hence $VI \lesssim [\nabla f]_{\alpha-1}^2R^d$.  To estimate $VII$, we note that 
$$\frac{(\nabla f(z)-\nabla f(x)) \cdot (x-y)}{|x-y|^{d+\alpha}} 1_{ _{|x-y| \leq 10R}}.$$
has mean value zero when considered as a kernel in $y$, and hence we can write this term as
$$\int_{|x-z| < 100R} \left( p.v. \int_{|x-y| \leq 10R} \frac{(\nabla f(z)-\nabla f(x)) \cdot (x-y)}{|x-y|^{d+\alpha}}\left(\phi\left(\frac{y-z}{R}\right)
- \phi\left(\frac{x-z}{R} \right)\right)\d y \right)^2 \d x .$$
The reader will now be familiar with the techniques used to show that $VII \leq C[\nabla f]^2_{\alpha-1}R^d$.  \\

We now come to term $VIII$.  So far, we have not used that $f \in I_{\alpha}(BMO)$, but only that
$\nabla f$ satisfies H\"older estimates.  However, this term will require us to use the full force of the hypotheses of the Proposition.  Consider the kernel $K$
$$K(x,y) = \frac{f(y)-f(x) - \nabla f(x) \cdot(y-x)}{|x-y|^{d + \alpha}} .$$
We claim that $K(x,y)$ satisfies the standard Calder\'on-Zygmund kernel estimates.  We will verify
\begin{align*}
&\text{(size)}&|K(x,y)| &\lesssim \frac{1}{|x-y|^{d}},\,\, \text{$x\neq y$},\\
&\text{(smothnesss)}&|K(x+h,y) - K(x,y)| + |K(x,y+h)-K(x,y)| &\lesssim \frac{|h|}{|x-y|^{d+1}},\,\, 2|h| \leq |x-y|.
\end{align*}
We will need both of these estimates to apply the $T(1)$ Theorem below.  For 
fixed $x,y$, $x \neq y$, and $h$ such that $2|h| \leq |x-y|$, we have
\begin{multline*}
|K(x,y+h) - K(x,y)| 
\leq  \left|\frac{\big(f(y+h)-f(x) - \nabla f(x)\cdot(y+h-x)\big)}{|x-y-h|^{d+\alpha}} 
- \frac{\big(f(y)-f(x) - \nabla f(x) \cdot (y-x)\big)}{|x-y-h|^{d+\alpha}} \right| \\
+ \left| \left(\frac{1}{|x-y-h|^{d+\alpha}} - \frac{1}{|x-y|^{d+ \alpha}} \right)
 \big(f(y)-f(x) - \nabla f(x) \cdot(y-x) \big) \right| \\
\leq \frac{|f(y+h)-f(y)-\nabla f(y) \cdot h|}{|x-y-h|^{d+\alpha}} + \frac{|(\nabla f(y)-\nabla f(x)) \cdot h|}{|x-y-h|^{d+\alpha}}\\
+ \left| \left(\frac{1}{|x-y-h|^{d+\alpha}} - \frac{1}{|x-y|^{d + \alpha}} \right)
 \big(f(y)-f(x) - \nabla f(x) \cdot(y-x\big) \right|.
\end{multline*}
Using standard estimates, we see that that the last three terms above are bounded on the order of $|h|/|x-y|^{d+1}$.  Similar reasoning yields that
$|K(x+h,y) - K(x,y)| \lesssim |h|/|x-y|^{d+1}$.  The Calder\'on-Zygmund size estimate follows from the estimate
$|A(y)-A(x) - \nabla A(x) \cdot(y-x)| \leq C |x-y|^\alpha$.  Notice the constants in both the Calder\'on-Zygmund size and smoothness are harmless
constants times $[\nabla f]_{\alpha-1}$.  

We now define the the operator
$$T[h](x) = p.v. \int_{\mathbb R^d} K(x,y)h(y) \d y.$$
We need to show that $T:L^2 \rightarrow L^2$.  First, let us focus our attention on the adjoint operator $T^*$, which is given by the kernel
$$K^*(x,y) = -\frac{f(x)-f(y) -\nabla f(y) \cdot (x-y)}{|x-y|^{d+\alpha}}.$$
We are going to verify the hypotheses of the $T(1)$ theorem: that $T(1), T^*(1) \in BMO$ and that $T^*$ satisfies the weak boundedness property.
Notice that, by our hypotheses,
$$T[1](x) = p.v. \int_{\mathbb R^d} K(x,y) \d y \in BMO, \,\,\, \|T[1]\|_{*} = \|D_\alpha f\|_*.$$
To see that $T^*[1] \in BMO$ with $\|T^*(1)\|_*=\|D_\alpha f\|_*$, notice that $T^*[1]=-T[1]$.
So, we just need to check that
$$p.v. \int \frac{x-y}{|x-y|^{d+\alpha}} \nabla f(y) \d y \in BMO,$$
with an appropriate bound on the $BMO$ norms.  But it is clear that the above expression is a constant times $D_{\alpha}f$, at least if one first checks the distributional identity
$$\bigg(p.v. \frac{x_j-y_j}{|x-y|^{d+\alpha}} \hat{\bigg)} (\xi) = iC(d,\alpha) \xi_j |\xi|^{\alpha-2}.$$

We still need to check the weak boundedness of $T^*$, i.e. that for all balls $B_M(z_0)$ and $\varphi_1,\varphi_2 \in C_0^\infty(B_M(z_0))$
\begin{equation}\label{wbpconstant}
|\langle \varphi_1, T^* \varphi_2 \rangle| \leq Cr^n(\|\varphi_1\|_\infty + r\|\nabla \varphi_1\|_\infty)(\|\varphi_1\|_\infty + r\|\nabla \varphi_2\|_\infty)
\end{equation}
for some constant $C>0$.  This was essentially proved in Lemma 4.3 of \cite{H}.  Here the author proves
that the same operator, but with $\alpha=1$, satisfies the weak boundedness property if $\nabla f \in BMO$.  To be precise, the author shows that inequality
\eqref{wbpconstant} holds with $C \approx \|\nabla f\|_*$.  In our setting, the exact same argument
works, the only change required being that instead of obtaining bounds in terms of $\|\nabla f\|_*$, we obtain bounds in terms of $[\nabla f]_{\alpha-1}$ by
using Campanato space estimates.  In particular, $T^*$ satisfies the weak boundedness property above with $C \approx [\nabla f]_{\alpha-1}$.  So, by the $T(1)$ Theorem, $T^*:L^2(\mathbb R^d) \rightarrow L^2(\mathbb R^d)$ with $\|T^*\|_{L^2 \rightarrow L^2} \leq C([\nabla f]_{\alpha-1} + \|D_{\alpha}f\|_*)$.  This mean that $T$ is $L^2 \rightarrow L^2$ bounded with the same constant, so
$$VIII \leq C ([\nabla f]_{\alpha-1}^2 + \|D_\alpha f\|_*^2)R^d.$$
We conclude the proof by recalling that $[\nabla f]_{\alpha-1} \lesssim \|D_{\alpha} f\|_*$. 
\end{proof}

\begin{proposition}\label{smoothsio2}
Suppose $f \in C^\infty$, $0 < \alpha <1$.  Let $\phi:\mathbb R^d \rightarrow \mathbb R$ be smooth, equal to $1$ on $B_2(0)$, equal to $0$
outside of $B_4(0)$, and such that $0 \leq \phi \leq 1$ everywhere.  Given $z \in \mathbb R^d$, $R>0$, define
\begin{align*}
g(x):=\phi\left(\frac{x-z}{R}\right)\big(f(x)-f(z)\big).
\end{align*}
Then we have the following bound
\begin{align}
\int_{\mathbb R^d} \big||\xi|^\alpha \hat g(\xi) \big|^2 \d \xi \leq CR^d,
\end{align}
where $C \approx \|D_{\alpha}f\|_*^2.$
\end{proposition}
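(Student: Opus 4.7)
The proof proposal is to mimic the structure of Proposition \ref{smoothsio1}, but with two simplifications: since $0<\alpha<1$, Proposition \ref{iaimpholder} only gives $f\in C^\alpha$ with $[f]_\alpha\lesssim\|D_\alpha f\|_*$, so there is no need to subtract a linear Taylor correction, and the Calder\'on--Zygmund kernel that eventually appears is of H\"older (rather than Lipschitz) type. As a first step I would write
\begin{align*}
\int_{\mathbb R^d}\big||\xi|^\alpha\hat g(\xi)\big|^2\,\d\xi\approx\int_{\mathbb R^d}\left(\mathrm{p.v.}\int_{\mathbb R^d}\frac{g(x)-g(y)}{|x-y|^{d+\alpha}}\,\d y\right)^2\d x,
\end{align*}
and split the outer integral into the regions $|x-z|\geq 100R$ (call it $I$) and $|x-z|<100R$ (call it $II$).

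For $I$, since $g(x)=0$ and $|f(y)-f(z)|\lesssim [f]_\alpha R^\alpha$ on $B_{4R}(z)$, I would estimate exactly as in \eqref{termIest} but with $R^{2\alpha}$ in place of $R^{2\alpha}$ coming from the H\"older estimate, getting $I\lesssim [f]_\alpha^2 R^d$. For $II$, I would use the decomposition
\begin{align*}
g(x)-g(y)=\bigl(\phi(\tfrac{x-z}{R})-\phi(\tfrac{y-z}{R})\bigr)(f(x)-f(z))-\phi(\tfrac{y-z}{R})(f(y)-f(x))
\end{align*}
and split $II$ into three pieces: term $III$ from the cutoff difference on $|x-y|\leq 10R$, term $IV$ from the cutoff difference on $|x-y|>10R$, and term $V$ from the H\"older part. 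For $III$, I would exploit the odd symmetry of the kernel (subtracting off the first-order Taylor expansion of $\phi((\cdot-z)/R)$ at $x$, which integrates to zero) and combine $|f(x)-f(z)|\lesssim [f]_\alpha R^\alpha$ with $|\phi((y-z)/R)-\phi((x-z)/R)-\nabla\phi((x-z)/R)\cdot(y-x)/R|\lesssim |x-y|^2/R^2$ to bound $III\lesssim [f]_\alpha^2 R^d$. Term $IV$ uses $|\phi((x-z)/R)-\phi((y-z)/R)|\lesssim |x-y|/R$ together with the same H\"older bound, yielding $IV\lesssim [f]_\alpha^2 R^d$.

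The main work will be in term $V$, which forces the use of the $T(1)$ theorem. I would introduce the operator
\begin{align*}
T[h](x)=\mathrm{p.v.}\int_{\mathbb R^d}K(x,y)\,h(y)\,\d y,\qquad K(x,y)=\frac{f(y)-f(x)}{|x-y|^{d+\alpha}},
\end{align*}
and verify that $K$ is a H\"older Calder\'on--Zygmund kernel: the size bound $|K(x,y)|\lesssim [f]_\alpha|x-y|^{-d}$ is immediate, while for the smoothness estimate, when $2|h|\leq|x-y|$ one writes $K(x,y+h)-K(x,y)$ as the sum of $(f(y+h)-f(y))/|x-y-h|^{d+\alpha}$ and $(f(y)-f(x))(|x-y-h|^{-(d+\alpha)}-|x-y|^{-(d+\alpha)})$, giving a bound of order $[f]_\alpha|h|^\alpha/|x-y|^{d+\alpha}$, which is the H\"older-type CZ smoothness condition; the analogous bound in the first variable is the same. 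The identity $T[1](x)=c(d,\alpha)D_\alpha f(x)$ (the standard singular-integral representation of the fractional Laplacian, valid for $0<\alpha<2$) shows $T(1)\in BMO$ with norm $\approx\|D_\alpha f\|_*$, and $T^*(1)=-T(1)$ gives the same control on $T^*(1)$. The weak boundedness of $T$ is handled exactly as in the proof of Proposition \ref{smoothsio1}, following the argument of Lemma 4.3 of \cite{H}, but with $\|\nabla f\|_*$ replaced by $[f]_\alpha$ via Campanato estimates. By the $T(1)$ theorem, $T:L^2\to L^2$ with norm $\lesssim [f]_\alpha+\|D_\alpha f\|_*$, so
\begin{align*}
V\lesssim\bigl([f]_\alpha^2+\|D_\alpha f\|_*^2\bigr)\int_{|x-z|<100R}\phi(\tfrac{x-z}{R})^2\,\d x\lesssim\|D_\alpha f\|_*^2 R^d,
\end{align*}
since $[f]_\alpha\lesssim\|D_\alpha f\|_*$ by Proposition \ref{iaimpholder}. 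The hardest step will be verifying the weak boundedness property with a constant of order $[f]_\alpha$; everything else is either algebraically simpler than the $\alpha\in(1,2)$ case or a direct transcription of the same estimates with a weaker kernel smoothness.
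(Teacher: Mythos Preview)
Your proposal is correct and follows essentially the same decomposition and $T(1)$ strategy as the paper. However, you miss the key simplification that makes this case genuinely easier than Proposition~\ref{smoothsio1}: the kernel $K(x,y)=\frac{f(y)-f(x)}{|x-y|^{d+\alpha}}$ is \emph{antisymmetric}, so the weak boundedness property is trivial (for test functions $\varphi_1,\varphi_2$ one has $\langle T\varphi_2,\varphi_1\rangle=-\langle T\varphi_1,\varphi_2\rangle$, and in particular $\langle T\varphi,\varphi\rangle=0$). The paper explicitly notes this, so what you flag as ``the hardest step'' is in fact free; there is no need to invoke the argument of \cite{H} here. A second, minor, point: for term $III$ you use the second-order Taylor expansion of $\phi$ as in the $1<\alpha<2$ case, which works, but since $0<\alpha<1$ the first-order bound $|\phi((x-z)/R)-\phi((y-z)/R)|\lesssim|x-y|/R$ already yields an integrable singularity $|x-y|^{-(d+\alpha-1)}$ near the diagonal, so the subtraction of the gradient term is unnecessary. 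The paper uses this simpler estimate.
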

\begin{proof}
The proof of this Proposition is extremely similar to the proof of Proposition \ref{smoothsio1}.  It only differs in minor ways, and these differences actually make the proof even easier.  We provide an outline for the reader.  We remark that, whenever bounds were obtained for terms on the order of $[\nabla f]_{\alpha-1}^2$
 in the proof of the last Proposition, the corresponding bounds in the proof of this proposition will be on the order of $[f]_{\alpha}^2.$

We perform a similar localization
$$g(x) = \phi\left( \frac{x-z}{R} \right) (f(x)-f(z)).$$
As in the previous proposition, we have
$$\int_{\mathbb R^d} \big||\xi|^\alpha \hat g(\xi) \big|^2 \d \xi  \approx \int_{\mathbb R^d} \left( p.v. \int_{\mathbb R^d} \frac{g(x)-g(y)}{|x-y|^{d+\alpha}} \d y \right)^2 \d x.$$
We bound this from above by a constant times $I + II$,
where
$$I =  \int_{|x-z| \geq 100 R} \left( p.v. \int_{\mathbb R^d} \frac{g(x)-g(y)}{|x-y|^{d+\alpha}} \d y \right)^2 \d x,$$
$$II = \int_{|x-z| \leq 100 R} \left( p.v. \int_{\mathbb R^d} \frac{g(x)-g(y)}{|x-y|^{d+\alpha}} \d y \right)^2 \d x.$$
 The estimate on term $I$ is almost exactly the same, except in the second line of \eqref{termIest}
we instead have the integral
$$ \int_{|x-z| \geq 100 R} \left( p.v. \int_{\mathbb R^d} \frac{\phi\left(\frac{y-z}{R}\right)\big(f(y)-f(z)\big)}{|x-y|^{d+\alpha}} \d y \right)^2 \d x.$$
Term $II$ is decomposed into terms $III$, $IV$, and $V$ which are exactly the same as in the previous case, only without the term $\nabla f(z) \cdot (x-z)$
in the numerator for term $III$ and $V$,  and with term $V$ being
being
$$\int_{|x-z| <100 R} \left(p.v. \int_{\mathbb R^d}\frac{f(x)-f(y)}{|x-y|^{d+\alpha}}\phi\left(\frac{y-z}{R}\right) \d y \right)^2 \d x.$$
Term $III$ is then estimated using $|f(z)-f(x)| \lesssim R^{\alpha}$ and 
$$\left| \phi\left(\frac{x-z}{R} \right)- \phi\left(\frac{y-z}{R} \right) \right| \lesssim \frac{|x-y|}{R}.$$
Term $IV$ is estimated in the same way as the case when $1 < \alpha <2$.  Now we estimate term $V$ in the way that term $VIII$ was estimated above,
with 
$$K(x,y) = \frac{f(x)-f(y)}{|x-y|^{d+\alpha}}.$$
It is routine (and very similar to the case above) to check that the fact that $f \in C^\alpha$ implies that $K(x,y)$ satisfies the standard Calder\'on-Zygmund  kernel estimates.
Notice that, in this case, we can apply the $T(1)$ theorem directly to the singular integral associated to $K(x,y)$.  In particular, the weak boundedness property is trivial to verify
because the kernel is antisymmetric.
\end{proof}

Let $\psi$ be a smooth, radial, compactly supported, even, positive function with $\int \psi =1$.  
We set $\psi_r(x) = r^{-d}\psi(x/r)$ (and extend this notation to apply to functions other than $\psi$ in the sequel.)  
Given a function $f:\mathbb R^d \rightarrow \mathbb R$, set
\begin{align}\label{overlinenu}
\overline{\nu}^f_1(x,r) &:=\left( \fint_{B_r(x)} \left(\frac{|f(y) - (f\ast \psi_r)(x) -  \nabla (f \ast \psi_r)(x) \cdot (y-x)|}{r} \right)^2 \d y \right)^{\frac{1}{2}},\\
\overline{\nu}^f_0(x,r) &:=\left( \fint_{B_r(x)} \left(\frac{|f(y) - (f\ast \psi_r)(x)|}{r} \right)^2 \d y \right)^{\frac{1}{2}}.
\end{align}

\begin{proposition}\label{dorronsoro}
Suppose that $f \in L^2$ and $0 < \alpha <2$.   We have
\begin{align*}
\int_0^\infty \int_{\mathbb R^d} \left(\frac{\overline{\nu}_{\lfloor \alpha \rfloor}^f(x,r)}{r^{\alpha-1}}\right)^2  \frac{\d x \d r}{r} \lesssim \int_{\mathbb R^d} |\hat f(\xi) |\xi|^\alpha|^2 \d \xi.
\end{align*}
\end{proposition}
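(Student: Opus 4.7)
The plan is to estimate the integrand pointwise by splitting the approximation error into a ``high-frequency'' piece $f - f\ast\psi_r$ and a ``low-frequency'' piece involving only the mollified function $g_r := f\ast\psi_r$, and then apply Plancherel to each. In the case $\lfloor\alpha\rfloor = 0$, I write
\begin{align*}
f(y) - g_r(x) = \bigl(f(y) - g_r(y)\bigr) + \bigl(g_r(y) - g_r(x)\bigr),
\end{align*}
and in the case $\lfloor\alpha\rfloor=1$, I write
\begin{align*}
f(y) - g_r(x) - \nabla g_r(x)\cdot(y-x) = \bigl(f(y) - g_r(y)\bigr) + \bigl(g_r(y) - g_r(x) - \nabla g_r(x)\cdot(y-x)\bigr).
\end{align*}

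For the first piece, by Fubini (using $y\in B_r(x)\iff x\in B_r(y)$) and Plancherel,
\begin{align*}
\int_{\mathbb R^d}\fint_{B_r(x)} |f(y) - g_r(y)|^2 \d y \d x = \|f - f\ast\psi_r\|_2^2 = \int |\hat f(\xi)|^2 |1 - \hat\psi(r\xi)|^2 \d\xi.
\end{align*}
Weighting by $r^{-2\alpha}\d r/r$ and substituting $s = r|\xi|$, I obtain $c_{\alpha,\psi}\int |\xi|^{2\alpha} |\hat f(\xi)|^2 \d\xi$, where
$c_{\alpha,\psi} = \int_0^\infty s^{-2\alpha}|1-\hat\psi(se)|^2 \d s/s$ is independent of the unit vector $e$ (since $\psi$ is radial). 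This is finite for $0<\alpha<2$, using that $|1-\hat\psi(s)|\lesssim s^2$ near $0$ (as $\psi$ is even, so $\nabla\hat\psi(0)=0$) and that $\hat\psi$ is Schwartz.

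For the second piece in the case $\lfloor\alpha\rfloor = 0$, the fundamental theorem of calculus gives $g_r(y) - g_r(x) = \int_0^1 \nabla g_r(x+t(y-x))\cdot(y-x)\d t$, so by Cauchy--Schwarz for $y\in B_r(x)$,
\begin{align*}
|g_r(y) - g_r(x)|^2 \leq r^2 \int_0^1 |\nabla g_r(x+t(y-x))|^2 \d t.
\end{align*}
The change of variables $z = x + t(y-x)$ in the inner average converts averaging over $B_r(x)$ into averaging over $B_{tr}(x)$; integrating in $x$ then yields
\begin{align*}
\int_{\mathbb R^d}\fint_{B_r(x)} |g_r(y) - g_r(x)|^2 \d y \d x \leq r^2 \|\nabla(f\ast\psi_r)\|_2^2 \approx r^2 \int |\xi|^2 |\hat\psi(r\xi)|^2|\hat f(\xi)|^2\d\xi.
\end{align*}
Multiplying by $r^{-2\alpha}$ and integrating in $\d r/r$ with $s=r|\xi|$, the $r$-integral becomes $|\xi|^{2\alpha-2}\int_0^\infty s^{2-2\alpha}|\hat\psi(se)|^2\d s/s$, which converges precisely for $0<\alpha<1$, giving a bound by $\int|\xi|^{2\alpha}|\hat f|^2\d\xi$. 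For $\lfloor\alpha\rfloor=1$, I use instead the Taylor remainder
\begin{align*}
g_r(y) - g_r(x) - \nabla g_r(x)\cdot(y-x) = \int_0^1 (1-t)(y-x)^T \nabla^2 g_r(x+t(y-x))(y-x)\d t,
\end{align*}
and the identical change-of-variables argument produces
\begin{align*}
\int_{\mathbb R^d}\fint_{B_r(x)} |g_r(y) - g_r(x) - \nabla g_r(x)\cdot(y-x)|^2 \d y\d x \lesssim r^4 \|\nabla^2(f\ast\psi_r)\|_2^2 \approx r^4 \int |\xi|^4|\hat\psi(r\xi)|^2 |\hat f|^2 \d\xi.
\end{align*}
The resulting $r$-integral, after the same substitution, becomes $|\xi|^{2\alpha-4}\int_0^\infty s^{4-2\alpha}|\hat\psi(se)|^2\d s/s$, which converges for $1\leq\alpha<2$, again yielding the bound $\int|\xi|^{2\alpha}|\hat f|^2\d\xi$.

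The argument is conceptually straightforward; the key structural step is the Fubini/change-of-variables maneuver that converts averages over $B_r(x)$ into ordinary $L^2$ norms, enabling the direct use of Plancherel. The main thing to track carefully is matching the order of Taylor expansion (constant vs.\ linear) to the admissible range of $\alpha$: at the boundary $\alpha = 1$ the convergence of the scale integral $\int s^{2-2\alpha}|\hat\psi(se)|^2\d s/s$ fails near $s=0$, which is exactly why one must upgrade from the constant to the linear approximation (i.e.\ use $\overline{\nu}_1$) precisely when $\alpha\geq 1$; the second-order Taylor remainder then supplies the extra factor of $s^2$ that restores integrability.
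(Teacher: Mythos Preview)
Your proof is correct and takes a genuinely different route from the paper's. The paper applies Plancherel in one shot to the whole expression $f(x-y)-f\ast\psi_r(x)$ (after translating), reducing the problem to showing that
\[
\int_0^\infty \int_{B_r(0)} \bigl|e^{-2\pi i y\cdot \xi/|\xi|} - \hat\psi(r)\bigr|^2 \d y\,\frac{\d r}{r^{d+1+2\alpha}} < \infty
\]
(with the obvious linear correction in the case $\lfloor\alpha\rfloor=1$), which is then checked by a direct Taylor estimate on the integrand. Your approach instead splits the error $f(y)-g_r(x)$ into a high-frequency piece $f-g_r$ evaluated at $y$ and a low-frequency piece involving only the smooth function $g_r$, and treats each via Plancherel separately, using the Taylor remainder with integral form on the low-frequency part. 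The paper's argument is slightly slicker in that it handles everything at once (it is essentially Azzam's proof from \cite{A}), while yours is more modular and makes transparent exactly why the order of approximation must jump at $\alpha=1$: the failure of $\int_0 s^{1-2\alpha}\,\d s$ forces the extra derivative. Both are standard and equally efficient.
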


\begin{proof}

We first consider the case when $0 < \alpha < 1$.  We have
\begin{multline}\label{FTbd}
\int_0^\infty \int_{\mathbb R^d} \left(\frac{\overline \nu_{0}^f(x,r)}{r^{\alpha-1}}\right)^2  \frac{\d x \d r}{r}  \\
= \int_0^\infty \int_{\mathbb R^d}  \int_{B_r(x)}|f(y)- f \ast \psi_r(x)|^2 \d y  \d x \frac{\d r}{r^{d+1+2\alpha}} \\
=  \int_0^\infty \int_{B_r(0)} \int_{\mathbb R^d} |f(x-y) - f \ast \psi_r(x)|^2 \d x \d y \frac{\d r}{r^{d+1+2\alpha}}  \\
=  \int_0^\infty \int_{B_r(0)} \int_{\mathbb R^d} |\hat{f}(\xi)e^{-2\pi i y \cdot \xi} - \hat{f}(\xi) \hat \psi(r\xi)|^2 \d \xi 
\d y \frac{\d r}{r^{d + 1 +2\alpha}}  \\
= \int_{\mathbb R^d}|\hat f(\xi)|^2 \int_0^\infty \int_{B_r(0)} |e^{-2 \pi i y\cdot \xi} - \hat \psi(r \xi)|^2 \d y \frac{\d r}{r^{d+1 +2\alpha}} \d \xi.
\end{multline}
We change variables in $r$, and then in $y$ to get that the expression above is 
\begin{multline*}
\int_{\mathbb R^d} |\hat f(\xi)|^2|\xi|^{d+2\alpha} \int_0^\infty \int_{B_{\frac{r}{|\xi|}(0)}} |e^{-2 \pi i y\cdot \xi} - \hat \psi(r)|^2 \d y \frac{\d r}{r^{d+2\alpha}} \d \xi \\
\approx \int_{\mathbb R^d}|\hat f(\xi)|\xi|^{\alpha} |^2 \int_0^\infty \int_{B_{r}(0)} 
|e^{-2 \pi i y\cdot \frac{\xi}{|\xi|}} - \hat \psi(r)|^2 \d y \frac{\d r}{r^{d+1 +2\alpha}} \d \xi .
\end{multline*}
Notice here that we are abusing notation by writing $\hat \psi(r)$.  This is justified because $\psi$, and hence also $\hat \psi$, is radial.\\

We are going to show that the inner integrand $|e^{-2\pi i y \cdot \frac{\xi}{|\xi|}} -\hat \psi (r)|$ is bounded on the order of 
$\min \{r,1\}$ when $y \in B_r(0)$.  First we show that the integrand is less than $cr$ for $0 \leq r \leq  1$.  We use the triangle inequality
\begin{align*}
|e^{-2\pi i y \cdot \frac{\xi}{|\xi|}} -\hat \psi (r)| \leq \left|\sin\left(2\pi y \cdot \frac{\xi}{|\xi|}\right)\right|
+\left|\cos\left(2\pi y \cdot \frac{\xi}{|\xi|}\right) - \hat\psi\left(2\pi y \cdot \frac{\xi}{|\xi|}\right)\right| 
+\left|\hat\psi\left(2\pi y \cdot \frac{\xi}{|\xi|}\right) - \hat\psi(r) \right|\\
=: I +II +III.
\end{align*}
Clearly we have $I,III \lesssim r$.  We need to bound $II$, but this is easy because (by the properties of $\psi$ listed above) the first-order Taylor polynomial
of $\cos(\cdot)$ at $0$ is the same as the first-order Taylor polynomial of $\hat \psi(r)$ at $0$ (viewing $\hat \psi$ as a function from $\mathbb R \rightarrow \mathbb R$.)  The fact that the integrand is bounded on the order of $1$ when $r\geq 1$ is trivial.

Our bounds on the inner integrand allow us to perform the following estimate
\begin{multline*}
\int_0^\infty \int_{B_{r}(0)} 
|e^{-2 \pi i y\cdot \frac{\xi}{|\xi|}} - \hat \psi(r)|^2 \d y \frac{\d r}{r^{d+1 +2\alpha}}\\
\lesssim \int_0^1 \int_{B_{r}(0)} 
|e^{-2 \pi i y\cdot \frac{\xi}{|\xi|}} - \hat \psi(r)|^2 \d y \frac{\d r}{r^{d+1 +2\alpha}} + 
\int_1^\infty \int_{B_{r}(0)} 
|e^{-2 \pi i y\cdot \frac{\xi}{|\xi|}} - \hat \psi(r)|^2 \d y \frac{\d r}{r^{d+1 +2\alpha}} \\
\lesssim \int_0^1 r^{1-2\alpha} dr + \int_1^\infty \frac{\d r}{r^{1+2\alpha}} \lesssim 1.
\end{multline*}
So, bringing everything together, we have
\begin{align*}
\int_0^\infty \int_{\mathbb R^d} \left(\frac{\nu_{0}^f(x,r)}{r^{\alpha-1}}\right)^2  \frac{\d x \d r}{r}\lesssim \int_{\mathbb R^d} |\hat f(\xi) |\xi|^{\alpha}|^2 \d \xi.
\end{align*}

We now prove the Proposition in the case that $1 \leq \alpha <2$.  Taking almost identical steps to those taken in \ref{FTbd}, we find
\begin{align*}
\int_0^\infty \int_{\mathbb R^d} \left(\frac{\nu_{1}^f(x,r)}{r^{\alpha-1}}\right)^2  \frac{\d x \d r}{r} \leq \int_{\mathbb R^d}|\hat f(\xi)|^2 \int_0^\infty
\int_{B_r(0)} |e^{-2\pi i y \cdot \xi}-2\pi i y \cdot \xi \hat \psi(r \xi) - \hat \psi (r\xi)|^2 \frac{ \d y \d r}{r^{d+1+2\alpha}} \d \xi \\
\leq \int_{\mathbb R^d} |\hat f (\xi) |\xi|^\alpha|^2 \int_0^\infty \int_{B_r(0)} |e^{-2 \pi i y\cdot \frac{\xi}{|\xi|}} 
 - 2 \pi i  y \cdot \frac{\xi}{|\xi|} \hat \psi(r) - \hat \psi(r)|^2 \d y \frac{\d r}{r^{d+1 +2\alpha}} \d \xi .
\end{align*}
To estimate the inner integrand when $r \geq 1$, we write
\begin{align}\label{integrandsplit}
|e^{-2 \pi i y\cdot \frac{\xi}{|\xi|}} 
 - 2 \pi i  y \cdot \frac{\xi}{|\xi|} \hat \psi(r) - \hat \psi(r)|
\leq |e^{-2\pi i y \cdot \frac{\xi}{|\xi|}}(1- \hat \psi(r))| + |\hat \psi(r)||e^{-2\pi i y \cdot \frac{\xi}{|\xi|}} - 2 \pi i y \cdot \frac{\xi}{|\xi|} - 1|.
\end{align}
Notice that for such $r$, $|e^{-2\pi i y \cdot \frac{\xi}{|\xi|}}(1- \hat \psi(r))| = |1- \hat \psi(r)| \lesssim 1$.
For the second term above
we use the estimate $|e^{-2\pi i y \cdot \frac{\xi}{|\xi|}} - 2 \pi i y \cdot \frac{\xi}{|\xi|} - 1| \lesssim 1+ r$ and the fact the $\hat \psi$ is Schwartz
$$|\hat \psi(r)||e^{-2\pi i y \cdot \frac{\xi}{|\xi|}} - 2 \pi i y \cdot \frac{\xi}{|\xi|} - 1| \lesssim |\hat \psi(r)|(1+r) \lesssim \frac{1+r}{1+r^2}\leq 1.$$
In the case that $r \leq 1$ we use the same decomposition in \eqref{integrandsplit}.  The first term in this display is bounded on the order of $r^2$ because
the first-order Taylor polynomial of $\hat\psi$ at $0$ is $1$.  For the second term we use that the
first Taylor polynomial of $e^x$ at $0$ is $1+x$ to estimate
$$|\hat \psi(r)||e^{-2\pi i y \cdot \frac{\xi}{|\xi|}} - 2 \pi i y \cdot \frac{\xi}{|\xi|} - 1| \lesssim |y|^2 \leq r^2.$$
So, we have the estimate
\begin{align*}
\int_0^\infty \int_{B_r(0)} |e^{-2 \pi i y\cdot \frac{\xi}{|\xi|}} 
 - 2 \pi i  y \cdot \frac{\xi}{|\xi|} \hat \psi(r) - \hat \psi(r)|^2 \d y \frac{\d r}{r^{d+1 +2\alpha}} \\
\lesssim \int_0^1 r^{d+4}r^{-d-1-2\alpha} \d r + \int_1^\infty \frac{\d r}{r^{1+2\alpha}} < \infty.
\end{align*}
So, we have proved the case where $1 \leq \alpha <2$ as well.
\end{proof}

Before continuing, we need to collect a few facts about the fractional Laplacian.

\begin{proposition}\label{Ialphatosqr}
Suppose $f \in I_{\alpha}(BMO)$, $0 < \alpha<2$.  We have, for all $R>0$ and $z \in \mathbb R^d$, the estimate
\begin{equation}
\int_0^R \int_{B_R(z)} \left(\frac{\overline{\nu}_{\lfloor \alpha \rfloor}^f(x,r)}{r^{\alpha-1}}\right)^2\frac{\d x \d r}{r} \leq C R^{d},
\end{equation}
where the constant $C$ is comparable to $\|D_\alpha f\|_*^2$.
\end{proposition}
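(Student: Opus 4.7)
The plan is to reduce to Propositions \ref{smoothsio1} and \ref{smoothsio2} by a localization argument, using Dorronsoro's estimate (Proposition \ref{dorronsoro}) as the bridge between the geometric square function and the Fourier-side norm $\int||\xi|^\alpha \hat{g}(\xi)|^2 d\xi$. Fix $z \in \mathbb{R}^d$ and $R>0$, and define $g$ exactly as in the relevant smoothsio proposition: $g(x) = \phi((x-z)/R)(f(x)-f(z))$ when $0<\alpha<1$, and $g(x) = \phi((x-z)/R)(f(x)-f(z)-\nabla f(z)\cdot(x-z))$ when $1 \leq \alpha <2$. Note that by Proposition \ref{iaimpholder}, pointwise evaluation of $f$ (and of $\nabla f$ when $\alpha > 1$) makes sense.

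The key observation is a shift-invariance property of $\overline{\nu}_{\lfloor\alpha\rfloor}$. On $B_{2R}(z)$ we have $g = f - P$ where $P$ is a constant (for $\alpha < 1$) or a first-order polynomial (for $\alpha \geq 1$). For $x \in B_R(z)$ and $r \in (0,R)$, the ball $B_r(x)$ and the support of $\psi_r(x-\cdot)$ both sit inside $B_{2R}(z)$, so $\psi_r \ast g(x) = \psi_r \ast f(x) - P(x)$ and $\nabla(\psi_r \ast g)(x) = \nabla(\psi_r \ast f)(x) - \nabla P$ (using here that $\psi$ is even and radial, so convolution with $\psi_r$ reproduces linear polynomials). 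Since $P$ is linear, the differences defining $\overline{\nu}_{\lfloor\alpha\rfloor}^g(x,r)$ and $\overline{\nu}_{\lfloor\alpha\rfloor}^f(x,r)$ agree pointwise, giving
$$\overline{\nu}_{\lfloor\alpha\rfloor}^f(x,r) = \overline{\nu}_{\lfloor\alpha\rfloor}^g(x,r) \quad \text{for } x \in B_R(z),\ r < R.$$

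With this identity in hand, I would extend the domain of integration and apply Proposition \ref{dorronsoro} to $g$ (which is in $L^2$ since it is bounded and compactly supported):
$$\int_0^R \int_{B_R(z)} \left(\frac{\overline{\nu}_{\lfloor\alpha\rfloor}^f(x,r)}{r^{\alpha-1}}\right)^2 \frac{dx\,dr}{r} \leq \int_0^\infty \int_{\mathbb{R}^d} \left(\frac{\overline{\nu}_{\lfloor\alpha\rfloor}^g(x,r)}{r^{\alpha-1}}\right)^2 \frac{dx\,dr}{r} \lesssim \int_{\mathbb{R}^d} \big||\xi|^\alpha \hat g(\xi)\big|^2 d\xi.$$
The proof then closes by invoking Proposition \ref{smoothsio1} (if $1 \leq \alpha <2$) or Proposition \ref{smoothsio2} (if $0 < \alpha <1$), which produce exactly the bound $C\|D_\alpha f\|_*^2 R^d$ on the right-hand side.

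The main obstacle is that Propositions \ref{smoothsio1} and \ref{smoothsio2} are stated for $f \in C^\infty$, while here $f$ is only a member of $I_\alpha(BMO)$. I would address this by smoothing: replace $f$ by $f_\eps = f \ast \eta_\eps$ for a standard mollifier $\eta_\eps$, verify that $\|D_\alpha f_\eps\|_* \leq \|D_\alpha f\|_*$ (convolution commutes with the Fourier multiplier $|\xi|^\alpha$ and does not increase the BMO norm), apply the argument above to each $f_\eps$, and then pass $\eps \to 0^+$ using Fatou's lemma on the left-hand side together with pointwise convergence of the $\overline{\nu}$'s (which follows from continuity of $f$ and $\nabla f$ given by Proposition \ref{iaimpholder}). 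A secondary technical point is the boundary case $\alpha = 1$, which is not literally covered by Proposition \ref{smoothsio1} since the step exploiting $[\nabla f]_{\alpha-1}$ degenerates; here one runs the same $T(1)$ scheme but with $BMO$-based Campanato estimates for $\nabla f$ in place of Hölder estimates.
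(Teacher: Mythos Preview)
For $\alpha \neq 1$ your outline matches the paper's argument essentially step for step: localize with $g$, observe $\overline{\nu}_{\lfloor\alpha\rfloor}^f = \overline{\nu}_{\lfloor\alpha\rfloor}^g$ on $B_R(z)\times(0,R)$, apply Proposition~\ref{dorronsoro}, then invoke Proposition~\ref{smoothsio1} or~\ref{smoothsio2} after mollifying $f$. The paper's limiting step is organized a bit differently from your Fatou argument (it controls $\overline{\nu}^{f^t-f}$ quantitatively by $t^{\alpha-1}$ or $t^\alpha$ and integrates over $r\in(\sqrt{t},R)$), but both versions are valid.

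The case $\alpha=1$ is where you diverge from the paper, and your sketch there is shaky. First, your localization subtracts $\nabla f(z)\cdot(x-z)$, but for $\alpha=1$ Proposition~\ref{iaimpholder} only gives $\nabla f\in BMO$ and differentiability a.e., so $\nabla f(z)$ need not exist at a given $z$; the paper instead subtracts $\nabla(f\ast\varphi_R)(z)\cdot(x-z)$. Second, and more substantively, the paper does \emph{not} try to push the $T(1)$ scheme of Proposition~\ref{smoothsio1} through with BMO--Campanato replacements. Many of the intermediate terms ($I$, $III$, $IV$, $VI$, $VII$) in that proof rely on the \emph{pointwise} bound $|f(x)-f(z)-\nabla f(z)\cdot(x-z)|\lesssim [\nabla f]_{\alpha-1}R^\alpha$, which has no direct analogue when $\nabla f$ is merely BMO. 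Instead, the paper exploits the special structure at $\alpha=1$: since $\int||\xi|\hat g|^2\,d\xi \approx \int|\nabla g|^2\,dx$, one can compute $\nabla g$ by the product rule and bound the resulting pieces directly via John--Nirenberg and $H^1$--$BMO$ duality, bypassing the singular-integral machinery entirely. This is both simpler and avoids the degeneration you flagged.
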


\begin{proof}
We split the proof to cover three cases: $1 < \alpha < 2$, $\alpha =1$, and $0<\alpha <1$.    
The case when $\alpha =1$ differs substantially from the other two cases.  The case when $0 < \alpha <1$ is only slightly different from the case
when $1 < \alpha <2$.

We now prove the case when $\alpha = 1$.  Fix $f \in I_{1}(BMO)$, $z \in \mathbb R^d$ and $R>0$.  We localize $f$ to $B_R(z)$ as follows
$$g(x) := \phi\bigg(\frac{x-z}{R}\bigg)\big(f(x)- f(z) + \nabla( f \ast \varphi_R)(z) \cdot(x-z) \big).$$
where $\phi$ is the same as in the statement of Proposition \ref{smoothsio1}.

Notice that $\nu_{1}^g(x,r) = \nu_{1}^f(x,r)$ for $x \in B_R(z)$ and $0 < r \leq R$.  So, we necessarily have
$$\int_0^R \int_{B_R(z)} \left(\overline{\nu}_{1}^f(x,r)\right)^2 \frac{\d x \d r}{r} 
= \int_0^R \int_{B_R(z)} \left(\overline{\nu}_{1}^g(x,r)\right)^2 \frac{\d x \d r}{r}.$$

We apply Proposition \ref{dorronsoro} to say
\begin{align}\label{betasqtoT}
 \int_0^R \int_{B_R(z)} \left(\overline{\nu}_{1}^g(x,r)\right)^2 \frac{\d x \d r}{r} \lesssim  \int_{\mathbb R^d} \left(\hat g (\xi)|\xi|\right)^2 \d \xi
\approx \int_{\mathbb R^d} |\nabla g(x)|^2 \d x
\lesssim \int |I(x)|^2 \d x  + \int |II(x)|^2 \d x.
\end{align}
Where $I(x)$ and $II(x)$ are obtained by estimating $|\nabla g|$ as follows
\begin{multline*}
|\nabla g(x)| \\
\leq \left| R^{-1}(\nabla \phi)\left(\frac{x-z}{R} \right)\big(f(x)-f(z) - \nabla(f\ast \varphi_R)(z) \cdot (x-z) \big) \right|
+\left|\varphi\left( \frac{x-z}{R}\right) \big(\nabla f(x) - \nabla(f \ast \varphi_R)(z) \big) \right| \\
=: I(x) + II(x).
\end{multline*}
We estimate $\int |I(x)|^2 \d x$ first
\begin{align*}
\int_{\mathbb R^d} |I(x)|^2 \d x &\leq R^{-2} \int_{|x-z| \lesssim R} |f(x) -f(z) -\nabla (f \ast \varphi_R)(z) \cdot (x-z)|^2 \d x\\
&= R^{-2} \int_{|x-z| \lesssim R} \left|\int_0^1 \nabla f(tx - (1-t)z) \cdot (x-z) \d t  -\nabla (f \ast \varphi_R)(z) \cdot (x-z)\right|^2 \d x \\
&\lesssim  \int_0^1 \int_{|x-z| \lesssim R} \left|\nabla f(tx+(1-t)z) - \nabla (f \ast \varphi_R)(z)\right|^2 \d x \d t \\
&\lesssim \int_0^1 t^{-d} \int_{|x-z| \lesssim tR} \left|\nabla f(x) - \nabla(f \ast \varphi_{tR})(z)\right|^2 \d x \d t\\
&+\int_0^1  R^d\left|\nabla (f \ast \varphi_R)(z) - \nabla (f \ast \varphi_{tR})(z)\right|^2 \d t = T_1 + T_2.
\end{align*}
We will show that both terms on the last line of the display above are bounded on the order of $R^d\|\nabla f\|_{BMO}^2$. Term $T_1$ can 
be seen to bounded on the order of $R^d\|\nabla f\|_{BMO}^2$ by noting that $\|\varphi(x) - 1_{B_1(0)}\|_{H^1} \lesssim 1$, which, together
with standard John-Nirenberg estimates and $H^1$-$BMO$ duality, yield the estimate
\begin{align*}
 &\int_0^1 t^{-d} \int_{|x-z| \lesssim tR} \left|\nabla f(x) - \nabla(f \ast \varphi_{tR})(z)\right|^2 \d x \d t\\
\lesssim& \int_0^1 t^{-d} \int_{|x-z| \lesssim tR} |\nabla f(x) - \fint_{|y-z| \leq tR}\nabla f(y) \d y|^2 \d x \d t
+R^d|\nabla (f \ast \varphi_{tR})(z) - \nabla(f \ast (tR)^{-d}1_{B_{tR}(0)})(z)|^2  \\
\lesssim& R^d \|\nabla f\|_{*}^2.
\end{align*}
For term $T_2$ we do the following calculation
\begin{multline*}
\left|\nabla (f \ast \varphi_R)(z) - \nabla (f \ast \varphi_{tR})(z)\right| \\
\left| \int (\nabla f)(z-y)\big( \varphi_R(y) - \varphi_{tR}(y)\big) \d y \right| = \left| \int (\nabla f)(z-(Rt)y) \big( t^{d} \varphi(ty) - \varphi(y) \big) \d y \right|\\
\leq \|\nabla f\|_* \|t^{d} \varphi(t\cdot)-\varphi(\cdot)\|_{H^1} \lesssim \|\nabla f\|_*.
\end{multline*}

It remains to estimate $\int |II(x)|^2 \d x$.  This uses the same techniques for the first term, but is even easier.  We omit the details, and conclude the proof
in the case when $\alpha =1$.

Now we consider the cases when $\alpha \neq 1$.  We will use Propositions \ref{smoothsio1} and \ref{smoothsio2} along with an approximation argument.\\

Fix $f \in I_{\alpha}(BMO)$, $1 < \alpha < 2$, $z \in \mathbb R^d$ and $R>0$.  We localize $f$ to $B_R(z)$ as follows
$$g(x) := \phi\bigg(\frac{x-z}{R}\bigg)\big(f(x)-f(z) -\nabla f(z) \cdot(x-z) \big).$$
where $\phi$ is a smooth, positive function with $\phi \equiv 1$ on $\overline{B_2(0)}$, $\phi = 0$ outside of $B_4(0)$, and $0 \leq \phi \leq 1$ everywhere.

Notice that $\nu_{1}^g(x,r) = \nu_{1}^f(x,r)$ for $x \in B$ and $0 < r \leq R$.  So, we necessarily have
$$\int_0^R \int_{B_R(z)} \left(\frac{\overline{\nu}_{1}^f(x,r)}{r^{\alpha-1}}\right)^2 \frac{\d x \d r}{r} 
= \int_0^R \int_{B_R(z)} \left(\frac{\overline{\nu}_{1}^g(x,r)}{r^{\alpha-1}}\right)^2 \frac{\d x \d r}{r}.$$
If we can show that the right hand side is bounded on the order of $R^d$, we will have proved the Proposition
in the case $1 < \alpha <2$.  Let $\{\psi_t\}_{t>0}$ be an approximation of the identity and set
\begin{align*}
f^t(x) &= (f \ast \psi_t)(x),\\
g^t(x) &= \psi\left( \frac{x-z}{R} \right)\big(f^t(x) - f^t(z) - \nabla f^t(z) \cdot (x-z) \big).
\end{align*}
Notice that for $t>0$, $f^t \in C^{\infty}$, and hence we can apply Lemma \ref{smoothsio1} to conclude
\begin{align}\label{rightbound}
\int_{\mathbb R^d} \big||\xi|^\alpha \hat{g^t}(\xi) \big|^2 \d \xi \lesssim \|D_\alpha f^t\|_*^2 R^d.
\end{align}
On the other hand, using arguments which are now familiar to the reader, we have
\begin{align}\label{leftbound}
\int_0^R \int_{B_R(z)} \left(\frac{\overline{\nu}^{f^t}_1(x,r)}{r^{\alpha-1}}\right)^2 \frac{\d x \d r}{r} 
\lesssim \int_{\mathbb R^d} \big(\hat g^t(\xi) |\xi|^\alpha \big)^2 \d \xi.
\end{align}
One can easily check that $\|D_\alpha f^t\|_* \leq C\|D_\alpha f\|_*$ uniformly in $t$.  Using
the H\"older bound on $\nabla f$ we derive the bound
\begin{align}
|\overline{\nu}_1^{f^t-f}(x,r)^2| \lesssim
 \fint_{B_r(x)} \left(\frac{|(f^t(y)-f(y)) -(f^t(x)-f(x)) - \nabla (f^t(x)-f(x))\cdot (y-x)|}{r} \right)^2 \d y
\lesssim t^{2(\alpha-1)}.
\end{align}
Hence,
\begin{multline*}
\int_{\sqrt{t}}^R \int_{B_R(z)} \left(\frac{\overline{\nu}_1^f(x,r)}{r^{\alpha-1}}\right)^2 \frac{\d x \d r}{r}
\lesssim \int_{\sqrt{t}}^R \int_{B_R(z)} \left(\frac{\overline{\nu}_1^{f^t}(x,r)}{r^{\alpha-1}}\right)^2 \frac{\d x \d r}{r} 
+ R^dt^{\alpha-1} \\
\lesssim \int_{\mathbb R^d} \big(\widehat{g^t}(\xi) |\xi|^\alpha \big)^2 \d \xi + R^dt^{\alpha -1} \lesssim \|D_\alpha f\|_*^2R^d + R^dt^{\alpha -1}.
\end{multline*}
Taking $t$ to $0$ yields the Proposition in the case that $1 < \alpha <2$.  As usual, the case $0<\alpha<1$ is almost exactly the same, but easier.  In this case we set
\begin{align*}
f^t(x) &= (f \ast \psi_t)(x),\\
g^t(x) &= \psi\left( \frac{x-z}{R} \right)\big(f_t(x) - f_t(z)\big).
\end{align*}
The only other difference between this case and the case when $1 < \alpha <2$ is that we now have the bound
$$|\overline{\nu}_1^{f^t-f}(x,r)^2| =
 \fint_{B_r(x)} \left(\frac{|(f^t(y)-f(y)) -(f^t(x)-f(x))|}{r} \right)^2 \d y \lesssim t^{2\alpha}.$$
 But one can then check
 $$\int_{\sqrt t}^R \int_{B_R(z)} \left(\frac{\nu_1^{f^t-f}(x,r)}{r^{\alpha-1}} \right)^2 \frac{\d x \d r}{r} \lesssim R^dt^\alpha.$$
 So, the same argument as the one given above works in this case as well.
\end{proof}

We now note that, in the case $1<\alpha<2$, $f \in I_{\alpha}(BMO)$ implies the tempered growth condition
\eqref{tempg1} for $\epsilon > \alpha$.  This is proved in \cite{Str}, see Theorem
\ref{stzthm} below.  So, we have proved one direction of
\ref{mainthm}.

\section{From Square Function Bounds to $I_{\alpha}(BMO)$.}

In this section we prove that the square function bounds \eqref{nu0} and \eqref{nu1} put a function in $I_{\alpha}(BMO)$, $0<\alpha<1$ and $1 \leq \alpha <2$, respectively (note: for the latter we also need 
to assume \eqref{tempg1} for $\epsilon>
\alpha$.)  To do this we will verify a criterion of Strichartz.  The following Theorem is proved
in \cite{Str}.

\begin{theorem}[\cite{Str}]\label{stzthm}
For $0<\alpha<1$, a function $f$ is in $I_{\alpha}(BMO)$ if and only if 
\begin{align}\label{stzcrit1}
\sup_Q \left(\frac{1}{|Q|} \int_Q \int_{Q} \frac{|f(x+y)-f(x)|^2}{|x-y|^{d+ 2\alpha}} \d y \d x \right)^\frac{1}{2} = B<\infty.
\end{align}
where the supremum above is taken over cubes (note: we can take it over balls as well, changing $B$ only by a multiplicative constant depending on dimension.)
For $0 < \alpha <2$, $f \in I_{\alpha}(BMO)$ if and only if $f$ satisfies the following ``tempered growth estimate" for all $\epsilon > \alpha$
\begin{align}\label{tempg}
\int \frac{|f(x)|}{1+ |x|^{d+\epsilon}} \d x <\infty,
\end{align}
and also the estimate
\begin{align}\label{stzcrit2}
\sup_Q \left(\frac{1}{|Q|} \int_Q \int_{Q} \frac{|2f(x) - f(x+y)-f(x - y)|^2}{|x-y|^{d+ 2\alpha}} \d y \d x \right)^\frac{1}{2} = B<\infty.
\end{align}
In both cases, the constant $B$ is comparable to $\|D_{\alpha} f\|_*$.  
\end{theorem}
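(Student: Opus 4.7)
My plan is to prove Theorem \ref{stzthm} by reducing it, via the identification $f \in I_\alpha(BMO) \iff D_\alpha f \in BMO$ (which is essentially built into Definition \ref{ialphabmodef}), to the equivalence of the $BMO$-norm of $D_\alpha f$ with the square-function quantity $B$. The tempered growth condition \eqref{tempg} enters only as the prerequisite that makes $D_\alpha f$ meaningful as an element of $\mathcal S'/P_{\lfloor\alpha\rfloor}$ in the range $1\le\alpha<2$. This part of the theorem is already delicate and is handled in \cite{Str}; once $D_\alpha f$ is defined, one proceeds to the oscillation estimates.

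The central input is the pointwise integral representation of the fractional Laplacian: for $0<\alpha<1$ and sufficiently smooth $f$,
\begin{align*}
D_\alpha f(x) \;=\; c_{d,\alpha}\,\mathrm{p.v.}\!\int_{\mathbb R^d}\frac{f(x)-f(x+y)}{|y|^{d+\alpha}}\,\d y,
\end{align*}
while for $0<\alpha<2$ (in particular the borderline $\alpha=1$ and the range $1<\alpha<2$),
\begin{align*}
D_\alpha f(x) \;=\; c_{d,\alpha}\!\int_{\mathbb R^d}\frac{2f(x)-f(x+y)-f(x-y)}{|y|^{d+\alpha}}\,\d y,
\end{align*}
the second-difference formula being absolutely convergent wherever $f$ has the appropriate regularity/growth. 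After reducing via John--Nirenberg to $L^2$ mean oscillation, namely $\|D_\alpha f\|_*^2\approx \sup_Q \fint_Q |D_\alpha f - c_Q|^2$ for suitable constants $c_Q$, I would plug in these formulas and use Minkowski in $L^2(Q,dx)$ to interchange the integrals in $y$ and $x$.

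The detailed bookkeeping: fix a cube $Q$ of sidelength $\ell=\ell(Q)$ centered at $x_Q$, and split each $y$-integral at $|y|=\ell$. For the local part $|y|\le\ell$, subtracting a $y$-dependent constant from $D_\alpha f(x)$ (namely the value of the inner integrand at $x=x_Q$) and applying Minkowski's inequality yields
\begin{align*}
\left(\fint_Q \Big|\int_{|y|\le\ell}\!\frac{2f(x)-f(x+y)-f(x-y)}{|y|^{d+\alpha}}\,\d y\Big|^2\d x\right)^{\!1/2}
\lesssim \left(\frac{1}{|Q|}\int_{Q}\!\int_{Q'}\!\frac{|2f(x)-f(x+y)-f(x-y)|^2}{|y|^{d+2\alpha}}\,\d y\,\d x\right)^{\!1/2},
\end{align*}
where $Q'$ is a slightly enlarged cube; this is controlled by $B$. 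For the tail $|y|>\ell$ I would choose $c_Q$ so that the tail contribution after subtraction exhibits extra smoothing: the difference between the tail integrand evaluated at $x\in Q$ and at $x_Q$ contributes an additional factor $|x-x_Q|/|y|\lesssim \ell/|y|$, producing a convergent $|y|$-integral absorbed by the tempered growth \eqref{tempg}. This yields $\|D_\alpha f\|_*\lesssim B$.

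For the reverse direction I would run the same expansions inside the square function: writing $2f(x)-f(x+y)-f(x-y)$ (resp.\ $f(x+y)-f(x)$) as an integral against $D_\alpha f$ using $I_\alpha$ and inverting Parseval/Plancherel locally on $2Q$, the $L^2(Q\times Q)$ integral in \eqref{stzcrit1}/\eqref{stzcrit2} is dominated by $\|D_\alpha f\|_*^2$ after a standard decomposition $D_\alpha f = (D_\alpha f)\chi_{2Q}+(D_\alpha f)\chi_{(2Q)^c}$ and use of John--Nirenberg on the local part together with decay estimates for the Riesz kernel on the far part. The main obstacle is the case $\alpha=1$: the first-difference formula diverges and Hölder regularity of $f$ is unavailable, so one is forced to use second differences and to carry through cancellations between the two arguments of $f$ carefully, exploiting oddness of the Riesz transform kernels to recover absolute convergence in the tail; this is the step where the precise form of $c_{d,\alpha}$ and the normalization in Definition \ref{aderivdef} must be tracked, and where the tempered growth hypothesis genuinely does work rather than being automatic.
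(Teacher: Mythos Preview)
The paper does not prove Theorem~\ref{stzthm}; it is quoted from \cite{Str} and used as a black box in Proposition~\ref{othprop}. So there is no in-paper proof to compare against, and the question is whether your sketch stands on its own.

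The overall architecture (integral representation of $D_\alpha$, John--Nirenberg reduction to $L^2$ oscillation, local/tail split at scale $\ell(Q)$) is the right shape, but two of the concrete steps do not go through as written. For the local piece, Minkowski in $L^2(Q,\d x)$ only gives
\[
\Big(\fint_Q\Big|\int_{|y|\le\ell}\frac{\Delta_y^2 f(x)}{|y|^{d+\alpha}}\,\d y\Big|^2\d x\Big)^{1/2}
\;\le\; \int_{|y|\le\ell}\frac{1}{|y|^{d+\alpha}}\Big(\fint_Q|\Delta_y^2 f(x)|^2\,\d x\Big)^{1/2}\d y,
\]
an $L^1$-in-$y$ expression with a non-integrable singularity at the origin. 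To reach the $L^2$-in-$y$ quantity appearing in $B$ you would need Cauchy--Schwarz against a weight like $|y|^{-d}\,\d y$, which diverges; subtracting the integrand at $x_Q$ does not repair this, since the resulting $x$-integral has no reason to vanish as $|y|\to 0$ under the sole hypothesis $B<\infty$.

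For the tail, your claim that the difference of the integrand at $x\in Q$ and at $x_Q$ contributes an extra factor $|x-x_Q|/|y|\lesssim \ell/|y|$ presupposes a Lipschitz-in-$x$ bound on $\Delta_y^2 f$, which is not available from \eqref{tempg} or from $B<\infty$. The tempered growth condition controls size, not smoothness, so it cannot by itself produce the gain needed to make $\int_{|y|>\ell}|y|^{-d-\alpha}\cdot(\ell/|y|)\,\d y$ converge.

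Strichartz's actual route bypasses both issues by not attempting a direct pointwise comparison through the integral formula: he leans on the $L^p$ square-function characterization $\|D_\alpha g\|_p\approx\big\|\big(\int|\Delta_y^2 g(\cdot)|^2|y|^{-d-2\alpha}\d y\big)^{1/2}\big\|_p$ for $1<p<\infty$ (Stein), and then passes to the BMO endpoint by localization and $H^1$--$BMO$ duality. If you want to make your outline work, that $L^p$ equivalence is the missing ingredient replacing the failed Minkowski/Cauchy--Schwarz step.
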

\begin{remark}
	We note that in \cite{Str} the author refers
	to \ref{tempg} as the ``tempered growth condition", but does not specify that it needs to hold for $\epsilon>\alpha$.  However, if one carefully reads the proof provided in that paper, they will see that $\epsilon>\alpha$
	is the correct range for the statement.
\end{remark}

We need to replace the optimal polynomials of the $\nu$ coefficients with ``approximate Taylor polynomials."  The content of the next proposition
shows that one can replace the infimum in the definition of the $\nu$ coefficients with an approximate Taylor polynomial and still retain the square function estimate.
When $0 < \alpha < 1$, we define 
\begin{align}\label{optimalnu0}
\tilde \nu_{0}^f(x,r) = \left(\fint_{\frac{r}{2} \leq |y| \leq r} \left(\frac{|f(x+y)-f(x)|}{r}\right)^2 
\d y\right)^{\frac{1}{2}}
\end{align}
and, when $1 \leq \alpha < 2$, we define 
\begin{align}\label{optimalnu1}
\tilde \nu_{1}^f(x,r) = \left(\fint_{\frac{r}{2} \leq |y| \leq r} \left(\frac{|f(x+y)-f(x) - (\phi_r \ast \nabla f)(x)\cdot y|}{r}\right)^2 
\d y\right)^{\frac{1}{2}},
\end{align}
where $\phi_r$ is some approximate identity.
\begin{lemma}\label{optimalbetabound}
For $0 <\alpha<2$ and $f$ satisfying
$$\int_0^R \int_{B_R(z)} \left(\frac{\nu_{\lfloor \alpha \rfloor}^f(x,r)}{r^{\alpha-1}}\right)^2 \frac{\d x \d r}{r} \leq C_1R^d$$
for all $R>0$ and $z \in \mathbb R^d$, we have the estimate
$$\int_0^R \int_{B_R(z)} \left(\frac{\tilde \nu_{\lfloor \alpha \rfloor}^f(x,r)}{r^{\alpha-1}}\right)^2 \frac{\d x \d r}{r} \lesssim C_1R^d.$$
\end{lemma}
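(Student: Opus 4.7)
The plan is to decompose $\tilde\nu_{\lfloor\alpha\rfloor}^{f}$ via triangle inequality into $\nu_{\lfloor\alpha\rfloor}^{f}$ plus controlled error terms. Denote by $l_{x,r}^{*}(y)=c_{x,r}+v_{x,r}\cdot(y-x)$ the $L^{2}$-best affine fit of $f$ on $B_{r}(x)$; in the constant ($\lfloor\alpha\rfloor=0$) case set $v_{x,r}\equiv 0$ so that $c_{x,r}=\langle f\rangle_{B_{r}(x)}$. Replacing $l^{*}_{x,r}$ with the specific polynomial appearing in $\tilde\nu_{\lfloor\alpha\rfloor}^{f}$ (namely $f(x)$ in the constant case and $f(x)+(\phi_r\ast\nabla f)(x)\cdot(y-x)$ in the affine case) and using that the $L^{2}$-norm of an affine function on a ball controls its coefficients yields
\begin{align*}
\tilde\nu_{0}^{f}(x,r)^{2}&\lesssim\nu_{0}^{f}(x,r)^{2}+\frac{|f(x)-c_{x,r}|^{2}}{r^{2}},\\
\tilde\nu_{1}^{f}(x,r)^{2}&\lesssim\nu_{1}^{f}(x,r)^{2}+\frac{|f(x)-c_{x,r}|^{2}}{r^{2}}+|(\phi_r\ast\nabla f)(x)-v_{x,r}|^{2}.
\end{align*}
The $\nu_{\lfloor\alpha\rfloor}^{f}$ contribution integrates to $\lesssim C_{1}R^{d}$ by hypothesis, so only the two error terms require work.

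For the constant-difference term, Jensen's inequality gives $|c_{x,r}-c_{x,2r}|\lesssim r\,\nu_{\lfloor\alpha\rfloor}^{f}(x,2r)$, and Lebesgue differentiation yields $c_{x,r/2^{k}}\to f(x)$ a.e., so the dyadic telescope produces
$$\frac{|f(x)-c_{x,r}|}{r}\lesssim\sum_{k=0}^{\infty}2^{-k}\,\nu_{\lfloor\alpha\rfloor}^{f}(x,r/2^{k}).$$
A weighted Cauchy--Schwarz inequality upgrades this to
$$\frac{|f(x)-c_{x,r}|^{2}}{r^{2}}\lesssim\sum_{k=0}^{\infty}2^{-k\epsilon}\,\nu_{\lfloor\alpha\rfloor}^{f}(x,r/2^{k})^{2}\qquad\text{for any }\epsilon\in(0,2),$$
and after the change of variable $u=r/2^{k}$ the resulting Carleson-type integral collapses to
$$\sum_{k=0}^{\infty}2^{k(2-2\alpha-\epsilon)}\int_{B_{R}(z)}\int_{0}^{R/2^{k}}\frac{\nu_{\lfloor\alpha\rfloor}^{f}(x,u)^{2}}{u^{2\alpha-2}}\frac{du\,dx}{u}\lesssim\sum_{k=0}^{\infty}2^{k(2-2\alpha-\epsilon)}C_{1}R^{d}.$$
Choosing any $\epsilon\in(\max\{0,\,2-2\alpha\},\,2)$, which is non-empty precisely because $\alpha>0$, makes the sum converge. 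A naive $\ell^{1}$ estimate on the telescope would only succeed for $\alpha>\tfrac{1}{2}$; the flexibility of $\epsilon$ is the main subtlety of the argument.

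For the final error term in the $\nu_{1}$ case, observe that both $v_{x,r}$ and $(\phi_r\ast\nabla f)(x)$ are linear functionals of $f$ obtained by convolving against kernels of size $O(r^{-(d+1)})$ supported in $B_{Cr}(x)$, and a direct computation shows that both kernels reproduce the slope of any affine function exactly. Their difference is therefore a convolution kernel $K_{r}$ with vanishing zeroth and first moments. Writing $(K_{r}\ast f)(x)=(K_{r}\ast(f-l))(x)$ for the best affine fit $l$ on $B_{Cr}(x)$ and applying Cauchy--Schwarz in the convolution yields $|v_{x,r}-(\phi_r\ast\nabla f)(x)|\lesssim\nu_{1}^{f}(x,Cr)$, whose contribution to the square function is controlled by $C_{1}R^{d}$ after a trivial rescaling in $r$. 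Combining the three bounds produces the lemma.
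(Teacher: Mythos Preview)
Your proof is correct. The decomposition of $\tilde\nu_{\lfloor\alpha\rfloor}^f$ and the handling of the gradient-difference term coincide with the paper's (the paper writes the latter as $y\cdot(\phi_r\ast\nabla(f-l_{x,r}^*))(x)$, moves the gradient onto $\phi_r$, and bounds by $\nu_1^f(x,r)$, which is exactly your kernel argument in different notation).

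The substantive difference lies in the constant-difference term $|f(x)-c_{x,r}|^2/r^2$. The paper reinserts the trivial annular average in a dummy variable $y$, applies Fubini to swap $x$ and $y$, and then bounds the resulting inner $x$-average over the annulus $r/2\le|x-y|\le r$ by $\nu_{\lfloor\alpha\rfloor}^f(y,2r)^2$ (using $B_r(x)\subset B_{2r}(y)$ to compare $c_{x,r}$ with the optimal polynomial on $B_{2r}(y)$). This route is shorter and needs no auxiliary parameter, though the comparison step is left implicit in the paper's ``clearly bounded''. Your dyadic telescope with weighted Cauchy--Schwarz is longer and requires choosing $\epsilon\in(\max\{0,2-2\alpha\},2)$, but it is fully explicit and produces the pointwise estimate $|f(x)-c_{x,r}|/r\lesssim\sum_{k\ge0}2^{-k}\nu_{\lfloor\alpha\rfloor}^f(x,r/2^k)$, which is of independent interest. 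Both routes are standard and either is acceptable here.
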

\begin{proof}

We now prove the Lemma is the case that $0 < \alpha < 1$.  Given $x\in \mathbb R^d$ and $r>0$, let
$c_{(x,r)}$ be the constant minimizing $\nu^f_0(x,r)$.  For $f$ as in the statement of the Lemma, $R>0$, and $z \in \mathbb R^d$, we have
\begin{multline*}
\int_0^R \int_{B_R(z)} \left(\frac{\tilde \nu_{0}^f(x,r)}{r^{\alpha-1}} \right)^2 \frac{\d x \d r}{r} \\
\lesssim \int_0^R \int_{B_R(z)} \fint_{\frac{r}{2} \leq |y| \leq r} \left(\frac{|f(x)-c_{(x,r)}|}{r}\right)^2 \d y r^{1-2\alpha} \d x \d r \\
+\int_0^R \int_{B_R(z)} \fint_{\frac{r}{2} \leq |y| \leq r} \left(\frac{|f(x+y)-c_{(x,r)}|}{r} \right)^2 \d y r^{1-2\alpha} \d x \d r \\
= I + II.
\end{multline*}
Term $II$ is trivially bounded up to a harmless constant
by
$$\int_0^R \int_{B_R(z)}\left(\frac{\nu_{0}^f(x,r)}{r^{\alpha-1}} \right)^2 \frac{\d x \d r}{r} \leq C_1R^d.$$
For term $I$, we change variables in $y$ so the range of integration in the inner integral is over $r/2 \leq |y-x| \leq r$.  Then, we notice that
if $|z-x| \leq R$ and $|x-y| <r$, then $|z-y| \leq 2R$.  So, we can use Fubini to bound the integral from above by
$$\int_0^R \int_{B_{2R}(z)} \fint_{\frac{r}{2} \leq |x-y| \leq r} \left(\frac{|f(x)-c_{(x,r)}|}{r} \right)^2 \d x r^{1-2\alpha} \d y \d r.$$
This integral is clearly bounded on the order of $C_1R^d$.

Now we prove the case when $1 \leq \alpha <2$.  We now let $l_{(x,r)}$ denote the affine function
minimizing $\nu_1^f(x,r)$.  We note that for any affine function $L(x)$ one has $L(x+y) = L(x) + y \cdot \big(\phi_r \ast \nabla L(x)\big)$.  We apply this fact to $l_{(x,r)}$ to obtain
the estimate
\begin{multline*}
\int_0^R \int_{B_{R}(z)} \left(\frac{\tilde \nu_{1}^f(x,r)}{r^{\alpha-1}} \right)^2 \frac{\d x \d r}{r}
\lesssim \int_0^R \int_{B_R(z)} \fint_{r/2 \leq |y| \leq r} \left(\frac{|f(x+y) -l_{(x,r)}(x+y)|}{r} \right)^2 \d y r^{1-2\alpha}\d x \d r \\
+\int_0^R \int_{B_R(z)} \fint_{r/2 \leq |y| \leq r}     \left(\frac{|f(x) -l_{(x,r)}(x)|}{r} \right)^2 \d y r^{1-2\alpha} \d x \d r \\
+ \int_0^R \int_{ B_R(z) }\fint_{r/2 \leq |y| \leq r} \left(\frac{|y \cdot (\phi_r \ast\nabla (f-l_{(x,r)}))(x)|}{r} \right)^2 \d y r^{1-2\alpha}\d x \d r  \\
= I + II + III.
\end{multline*}
We need to show that $I$, $II$, and $III$ are all bounded on the order of $R^d$.  By the definition of $l$, it is clear that
$$I \lesssim \int_0^R \int_{B'_R(z)} \left(\frac{\nu_{1}^f(x,r)}{r^{\alpha-1}}\right)^2 \frac{\d x \d r}{r} \lesssim C_1R^d.$$
Term $II$ is estimated in the exact same way that term $I$ in the case $0 < \alpha <1$ was estimated, which will show that it is bounded on the order of
$C_1R^d$.  For term $III$ we move the gradient in the convolution
off $f-l_{(x,r)}$ and onto $\phi_r$.  This bounds the inner integrand in term $III$ by
$$\left(\frac{y \cdot \frac{1}{r} ((\nabla \phi_r) \ast (f-l_{(x,r)})(x)}{r}  \right)^2 
\lesssim \left(\fint_{B_{r}(x)} \frac{|f(\zeta) - l_{(x,r)}(\zeta)|}{r}\d \zeta \right)^2 \leq \nu_{1}^f(x,r)^2 .$$
Where the last inequality above is an application of H\"older's inequality.  This clearly allows us to bound term $III$ on the order of $C_1R^d$.
\end{proof}

\begin{proposition}\label{othprop}
Suppose $0<\alpha <2$, and $f: \mathbb R^d \rightarrow \mathbb R$ satisfies
\begin{align*}
\int_0^R \int_{B'_R(z)} \left(\frac{\nu^f_{\lfloor \alpha \rfloor} (x,r)}{r^{\alpha-1}} \right)^2 \frac{\d x \d r}{r} \leq C_1 R^d.
\end{align*}
for all $R>0$ and all $z \in \mathbb R^d$.  Additionally,
in the case that $1 \leq \alpha <2$, assume $f$ satisfies
\ref{tempg1} for all $\epsilon>\alpha$.  Then $f \in I_{\alpha}(BMO)$, and $\|D_\alpha f\|_{*}^2 \lesssim C_1$.  
\end{proposition}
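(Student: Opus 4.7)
The plan is to verify Strichartz's criterion from Theorem \ref{stzthm}. After invoking Lemma \ref{optimalbetabound} to pass from $\nu_{\lfloor\alpha\rfloor}^f$ to the ``approximate-Taylor'' variant $\tilde\nu_{\lfloor\alpha\rfloor}^f$, the task reduces to bounding, for every ball $B_R(z)$,
$$\int_{B_R(z)}\int_{|y|\leq R}\frac{|\delta_\alpha f(x,y)|^2}{|y|^{d+2\alpha}}\,\d y\,\d x \;\lesssim\; C_1 R^d,$$
where $\delta_\alpha f(x,y):= f(x+y)-f(x)$ when $0<\alpha<1$ and $\delta_\alpha f(x,y):= 2f(x)-f(x+y)-f(x-y)$ when $1\leq\alpha<2$. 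In the latter range the tempered growth condition \eqref{tempg1} is assumed, so once the displayed bound is established, Theorem \ref{stzthm} will immediately furnish $f\in I_\alpha(BMO)$ with $\|D_\alpha f\|_*^2\lesssim C_1$.

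For $0<\alpha<1$ the connection is a direct Fubini computation. The identity
$$r^{d+2}\,\tilde\nu_0^f(x,r)^2 \;\approx\; \int_{r/2\leq|y|\leq r} |f(x+y)-f(x)|^2\,\d y,$$
combined with the fact that the constraint $|y|\in[r/2,r]$ is equivalent to $r\in[|y|,2|y|]$ (so that $\int_{|y|}^{2|y|} r^{-d-2\alpha-1}\,\d r \approx |y|^{-d-2\alpha}$), yields after swapping the order of integration
$$\int_{|y|\leq R/2}\frac{|f(x+y)-f(x)|^2}{|y|^{d+2\alpha}}\,\d y \;\lesssim\; \int_0^R \frac{\tilde\nu_0^f(x,r)^2}{r^{2\alpha-2}}\,\frac{\d r}{r}.$$
Integrating in $x\in B_R(z)$ and applying the hypothesis (after doubling $R$ to cover $|y|\leq R$ rather than $R/2$) produces the required $O(R^d)$ bound, which is exactly \eqref{stzcrit1}.

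For $1\leq\alpha<2$ the essential ingredient is the observation that any linear map $L$ satisfies $L(y)+L(-y)=0$, so that
$$2f(x)-f(x+y)-f(x-y) \;=\; -\bigl[f(x+y)-f(x)-L(y)\bigr]-\bigl[f(x-y)-f(x)-L(-y)\bigr]$$
for \emph{every} linear $L$. Taking $L(y)=(\phi_r\ast\nabla f)(x)\cdot y$ and applying the change of variable $y\mapsto -y$, which preserves the annulus $\{r/2\leq |y|\leq r\}$ and turns the second summand on the right into a copy of the first, we obtain
$$\int_{r/2\leq|y|\leq r}|2f(x)-f(x+y)-f(x-y)|^2\,\d y \;\lesssim\; r^{d+2}\,\tilde\nu_1^f(x,r)^2.$$
The same Fubini step as in the previous case then transforms this pointwise-in-$r$ bound into the integrated estimate verifying \eqref{stzcrit2}. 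The main subtlety lies precisely in this symmetrization: the hypothesis controls only the best affine approximation to $f$, not a genuine second derivative, so the unknown linear term $L$ must disappear from the final estimate; the second-order symmetric difference appearing in Strichartz's criterion is exactly the object for which that cancellation is automatic, regardless of how $L$ is chosen.
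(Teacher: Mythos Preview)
Your proposal is correct and follows essentially the same route as the paper: invoke Lemma~\ref{optimalbetabound} to replace $\nu_{\lfloor\alpha\rfloor}^f$ by $\tilde\nu_{\lfloor\alpha\rfloor}^f$, exploit the cancellation $L(y)+L(-y)=0$ of the approximate linear term to produce the second symmetric difference $2f(x)-f(x+y)-f(x-y)$ (the paper writes this out explicitly in the first line of display \eqref{sqrfrombeta}), then use Fubini and $\int_{|y|}^{2|y|}\frac{\d r}{r}\approx 1$ to convert the $r$-integral into the $|y|^{-d-2\alpha}$ weight required by Strichartz's criterion. Your explanation of why the symmetrization step works is in fact a bit more explicit than the paper's, but the argument is the same.
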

\begin{proof}
Fix $R>0$ and $z \in \mathbb R^d$.  By Lemma \ref{optimalbetabound}, our hypothesis implies
$$\int_0^R \int_{B_R(z)}  \left(\frac{\tilde \nu_{\lfloor \alpha \rfloor}^f(x,r)}{r^{\alpha-1}} \right)^2 \frac{\d x \d r}{r}  \lesssim C_1R^d$$
Suppose that $1 \leq \alpha <2$.  The left hand side of the expression above is clearly bounded from below by a constant times
\begin{multline}\label{sqrfrombeta}
\int_0^R \int_{B_R(z)} \fint_{\frac{r}{2} \leq |y| \leq r} \frac{|f(x) - f(x+y) +(\phi_r \ast \nabla f(x))\cdot y
						+f(x) -f(x-y) -(\phi_r \ast \nabla f(x))\cdot y|^2  }{r^{2\alpha-1}} \frac{\d y \d x \d r}{r} \\
=\int_{0}^{R} \int_{B_R(z)}  \fint_{\frac{r}{2} \leq |y| \leq r} \frac{|2f(x) - f(x+y)-f(x-y)|^2}{r^{2\alpha-1}} \frac{\d y \d x \d r }{r}\\
\approx \int_{0}^{R} \int_{B_R(z)} \int_{\frac{r}{2} \leq |y| \leq r} \frac{|2f(x) - f(x+y)-f(x-y)|^2}{|y|^{d+2\alpha}} 
\frac{\d y \d x \d r}{r}\\
\geq \int_{B_{\frac{R}{2}}(z)} \int_{|y| \leq \frac{R}{2}}  \frac{|2f(x) - f(x+y)-f(x-y)|^2}{|y|^{d+2\alpha}} \left(\int_{|y|}^{2|y|} \frac{\d r}{r} \right) 
\d y \d x \\
\approx  \int_{B_{\frac{R}{2}}(z)} \int_{|y| \leq \frac{R}{2}}  \frac{|2f(x) - f(x+y)-f(x-y)|^2}{|y|^{d+2\alpha}} \d y \d x
\end{multline}
The Proposition now follows from Theorem 3.3 of \cite{Str},
stated in Theorem \ref{stzthm} above.

The same Theorem also says that, for $0<\alpha <1$, $f \in I_{\alpha}(BMO)$ if the following estimate holds for all $z \in \mathbb R^d$ and $R>0$
$$\int_{B_R(z)} \int_{|y| \leq R}  \frac{|f(x) - f(x+y)|^2}{|y|^{d+2\alpha}} \d y \d x \leq C_2R^d$$
and, moreover, in this case $\|D_\alpha f\|_*^2 \lesssim C_1$.  So, we prove the Proposition in the case when $0<\alpha <1$ by performing an estimate which is completely analogous to \ref{sqrfrombeta}, but easier.  We omit the details.
\end{proof}
In light of Proposition \ref{othprop}, we have finished the proof of Theorem \ref{mainthm}.

\section{Appendix}
We prove Proposition \ref{Sldual}.  The proof
of this proposition is a modification of the proof
of Proposition 1.1.3 of \cite{G2}.

\begin{proof}[Proof of Proposition \ref{Sldual}]
Let $(\mathcal S_l)^*$ denote the dual of $\mathcal S_l$.  For $u \in \mathcal S'$, let $S(u)$ be the restriction of $u$ to $\mathcal S_l$.  Suppose that 
$u \in \mathcal S'$ is such that $S(u) =0$, i.e. $\langle u,\varphi \rangle = 0$ for all $\varphi \in \mathcal S_l$.  Then, in particular, 
$\langle \hat u, \check \varphi \rangle=0$ for all $\check \varphi \in \mathcal S$ supported away from the origin, and so $u$ must be a polynomial.  
Suppose, by way of contradiction, that the degree of $u$ is $k$ and $k >l$.  Suppose one term of $u$ of degree $k$ is
$$a x_{i_1}^{n_1}...x_{i_j}^{n_j}$$
where $a\neq 0$, and $n_1 + ... + n_j = k$.  Let $\theta: \mathbb R^n \rightarrow \mathbb R$ be smooth, equal to $1$ on $B_1(0)$, and supported in $B_2(0)$.  Set
$$\phi(x) : = \theta(x) \left(a  x_{i_1}^{n_1}...x_{i_j}^{n_j}\right).$$
We note that $\phi \in \mathcal S_l$.  This is true because any derivative of order up to $l$ of $\hat \phi$ must evaluate to $0$ at $x=0$.  We also have
$$\left( \partial_{i_1}^{n_1}...\partial_{i_j}^{n_j} \right) \phi(x)\bigg \vert_{x=0}  \neq 0.$$
Moreover, if $cx_{i'_1}^{n_1'}...x_{i'_{j'}}^{n_j'}$ is any term other term of degree $\leq k$ (a term of any polynomial, not just of $u$), then  
\begin{align*}
\left(\partial_{i'_1}^{n_1'}...\partial_{i_{j'}}^{n_j'} \right)  x_{i_1}^{n_1}...x_{i_j}^{n_j}\bigg \vert_{x=0}  = 0.
\end{align*}
So, we have $\langle \check{u}, \hat \phi \rangle \neq 0$ and $\phi \in \mathcal S_l$, a contradiction.  So, the degree of $u$ must be less than or equal
to $l$.  So, $\ker S \subseteq P_l$.  The other inclusion is obvious from the definitions, hence $\ker S = P_l$. \\

To finish the proof, it is enough so show that the range of $S$ is $(\mathcal S_l)^*$.  Given $u \in (\mathcal S_l)^*$, $u$ is a linear functional 
defined on $\mathcal S_l$, which is a subspace of $\mathcal S$.  So, there is a finite sum of
Schwartz seminorms $Q$ such that, for all $\varphi \in \mathcal S_l$
$$|\langle u, \varphi \rangle| \leq C Q(\varphi).$$
An application of the Hahn-Banach theorem tells us that $u$ has an extension to all of $\mathcal S$, and hence that $S$ is a surjective mapping.
\end{proof}

We need the following Proposition to show
that the distributions $I_{\alpha}(BMO)$ are well
defined.
\begin{proposition}
Given $b \in BMO$, the integral
$$ \int b(x) \varphi(x) \d x$$
is well defined and absolutely convergent for every $\varphi \in \mathcal S_0$.  Moreover, $b$ can be identified with a distribution in
$\mathcal S' / P_0 = (\mathcal S_0)^*$.  In other words, there is a finite sum of Schwartz seminorms $Q_b$ such that for every $\varphi \in \mathcal S_0$
$$|\langle b, \varphi \rangle| := \left|\int b(x) \varphi(x) \d x \right| \leq C Q_b(\varphi).$$
\end{proposition}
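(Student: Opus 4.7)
The plan is to establish both absolute convergence and the continuity bound simultaneously through a dyadic annular decomposition, using the standard logarithmic growth of BMO averages as the only nontrivial input. The vanishing moment of $\varphi$ will play no role for absolute convergence; it enters only implicitly, via Proposition \ref{Sldual}, in the identification of the resulting distribution modulo constants.

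First I would record the classical telescoping estimate: for any $R \geq 1$, iterating $|b_{B_{2r}(0)} - b_{B_r(0)}| \lesssim \|b\|_{*}$ gives
\begin{align*}
|b_{B_R(0)} - b_{B_1(0)}| \lesssim \|b\|_{*} (1 + \log R),
\end{align*}
and therefore
\begin{align*}
\fint_{B_R(0)} |b(x)| \d x \;\leq\; |b_{B_R(0)}| + \|b\|_{*} \;\lesssim\; |b_{B_1(0)}| + \|b\|_{*} (1 + \log R).
\end{align*}

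Next I would decompose $\mathbb R^d$ into the dyadic annuli $A_k := B_{2^{k+1}}(0) \setminus B_{2^k}(0)$ for $k \geq 0$, together with $A_{-1} := B_1(0)$. Introducing the Schwartz seminorm $\mathcal N_N(\varphi) := \sup_{x} (1+|x|)^N |\varphi(x)|$ with the choice $N := d+2$, one has $|\varphi(x)| \leq \mathcal N_N(\varphi)(1+|x|)^{-N}$, so for $k \geq 0$
\begin{align*}
\int_{A_k} |b(x)||\varphi(x)| \d x \;\lesssim\; \mathcal N_N(\varphi)\, 2^{-kN} \int_{B_{2^{k+1}}(0)} |b(x)| \d x \;\lesssim\; \mathcal N_N(\varphi)\, 2^{k(d-N)} \bigl(|b_{B_1(0)}| + \|b\|_{*}(1+k)\bigr),
\end{align*}
with the $k=-1$ piece handled by $\int_{B_1(0)} |b| \d x \lesssim |b_{B_1(0)}| + \|b\|_{*}$. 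Since $N - d = 2$, summing the resulting geometric-like series over $k \geq -1$ yields
\begin{align*}
\int_{\mathbb R^d} |b(x)||\varphi(x)| \d x \;\lesssim\; \bigl(|b_{B_1(0)}| + \|b\|_{*}\bigr)\, \mathcal N_N(\varphi).
\end{align*}
This single bound gives absolute convergence of $\int b \varphi \, \d x$ and the required Schwartz-seminorm estimate $|\langle b, \varphi \rangle| \leq C Q_b(\varphi)$, with $Q_b := C(|b_{B_1(0)}| + \|b\|_{*})\, \mathcal N_N$, a constant (depending only on $b$) times one Schwartz seminorm of $\varphi$.

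Finally, the continuous linear functional $\varphi \mapsto \int b(x)\varphi(x) \d x$ on $\mathcal S_0$ just constructed belongs to $(\mathcal S_0)^*$, and by Proposition \ref{Sldual} this is canonically identified with $\mathcal S'/P_0$, completing the identification of $b$ with a distribution in this space. No serious obstacle arises in the argument, since all ingredients are classical; the only point requiring care is choosing $N$ large enough (any $N > d$ suffices) to beat the logarithmic BMO growth against the dyadic volume $2^{kd}$.
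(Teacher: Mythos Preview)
Your proof is correct. The approach differs from the paper's in two respects worth noting. First, the paper invokes the Fefferman--Stein estimate $\int \frac{|b(x)-\langle b\rangle_{B_1(0)}|}{(1+|x|)^{d+1}}\,\d x \leq c\|b\|_*$ as a black box, whereas you essentially reprove a version of it by hand via the dyadic annular decomposition and the logarithmic growth of BMO averages; your argument is thus more self-contained. Second, the paper uses the vanishing moment of $\varphi$ to replace $b$ by $b-\langle b\rangle_{B_1(0)}$ inside the integral, obtaining a bound with constant depending only on $\|b\|_*$; your bound carries the representative-dependent quantity $|b_{B_1(0)}|$ in the constant. This is harmless for the statement as written (the seminorm $Q_b$ is allowed to depend on $b$), and you correctly observe that the mean-zero condition is what makes the functional on $\mathcal S_0$ independent of the representative, hence well defined as an element of $\mathcal S'/P_0$. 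The paper's route yields a slightly sharper constant; yours avoids an external citation.
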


\begin{proof}
The fact that the integrals in question are well defined is clear because our test functions have mean value zero.  Additionally, for $b \in BMO$ and $\varphi \in \mathcal S_0$,
$$\int |b(x) \varphi(x)| \d x \lesssim
\int \frac{|b(x)|}{1+ |x|^{d+1}}\d x < \infty,$$  
because $BMO$ functions satisfy \ref{tempg1}
for every $\epsilon>0$.  So the integral in
question is absolutely convergent.

Let $\varphi \in \mathcal S$ be a function with mean value zero.  We use the fact that
$$\int \frac{|b(x) - \langle b \rangle_{B_1(0)}|}{(1+|x|)^{n+1}} \leq c \|b\|_*.$$
For a proof of this fact, see \cite{FS}.  Because $\varphi \in \mathcal S$, $|\varphi(x)| \leq c(\varphi) (1+|x|)^{-n-1}$,
where $c(\varphi)$ is comparable to a finite sum of Schwartz seminorms of $\varphi$ (and these seminorms are the same as $\varphi$ varies.)  
So, we have the estimate
$$ \left| \int b(x) \varphi(x) \d x \right| = \left| \int (b(x) - \langle b \rangle_{B_1(0)}) \varphi(x) \d x \right| \leq c(\varphi)\|b\|_*. $$
By our note above about the nature of $c(\varphi)$, the Proposition follows.
\end{proof}

We recall Definition \ref{ialphabmodef}.\\

\noindent\textbf{Definition 2.5}
	Given $\alpha>0$, $I_{\alpha}(BMO)$ is the subset of $\mathcal S'/P_{\lfloor \alpha \rfloor}$ consisting of distributions $I_{\alpha}(b)$, $b \in BMO$ 
	defined via the following action on $\varphi \in \mathcal S_{\lfloor \alpha \rfloor}$
	$$I_{\alpha}(b)(\varphi) := \int b(x)I_{\alpha}(\varphi)(x) \d x.$$
\\

Let's discuss why these distributions are well defined.  The issues at hand are absolute convergence of the integral and that we need to have $I_{\alpha}(\varphi) \in \mathcal S_0$ when 
$\varphi \in \mathcal S_{\lfloor \alpha \rfloor}$.  For the latter to be true, it is enough to verify that for $\varphi \in \mathcal S_{\lfloor \alpha \rfloor}$
$|\xi|^{-\alpha} \hat \varphi(\xi) \in \mathcal S$ and
$$ \lim_{\xi \rightarrow 0} |\xi|^{-\alpha} \hat\varphi(\xi) = 0.$$
We leave verification of these facts to the reader.
Once we know that $I_{\alpha}(\varphi) \in \mathcal S_0$, the Proposition above
tells us that the integral is well defined and absolutely convergent.

Now we want to verify that $I_{\alpha}(b)$ can be identified with a function when $b \in BMO$.
\begin{proof}[Proof of Proposition \ref{distcoincide}]
  We first note that if $\varphi \in \mathcal S_0$ and $\alpha>0$
$$I_{\alpha}(\varphi)(x) = \int \varphi(y) \left(\frac{1}{|x-y|^{n-\alpha}} - \frac{1}{|x|^{n-\alpha}} \right) \d y.$$
So,
\begin{align*}
I_{\alpha}(b)(\varphi) = \int b(x) \int \varphi(y) \left(\frac{1}{|x-y|^{n-\alpha}} - \frac{1}{|x|^{n-\alpha}} \right) \d y \d x.
\end{align*}
It would be nice if we could use Fubini to switch the order of integration, but it isn't clear that the integral is absolutely convergent.  We replace $b$ with
\begin{align*}
b_k(x) = \begin{cases}
k & b(x) \geq k \\
b(x) &-k \leq b(x) \leq k \\
-k & b(x) \leq -k
\end{cases}
\end{align*}
For the estimates we are about to perform we need that 
$\||\cdot -y|^{\alpha - d} - |\cdot|^{\alpha-d}\|_{H^1} = c|y|^{\alpha}\approx\||\cdot -y|^{\alpha - d} - |\cdot|^{\alpha-d}\|_{L^1}$.  This is 
shown in the proof of Theorem 3.4 in \cite{Str}.  We have
\begin{align*}
\iint \left|b_k(x)\varphi(y) \left(\frac{1}{|x-y|^{d-\alpha}} - \frac{1}{|x|^{d-\alpha}} \right) \right| \d y \d x
 \leq k \int |y|^\alpha|\varphi(y)| \d y <\infty.
\end{align*}
So, we can apply Fubini to conclude that
$$\int b_k(x) \int \varphi(y) \left(\frac{1}{|x-y|^{d-\alpha}} - \frac{1}{|x|^{d-\alpha}} \right) \d y \d x
=\iint b_k(x) \left(\frac{1}{|x-y|^{n-\alpha}} - \frac{1}{|x|^{d-\alpha}} \right) \d x\varphi(y) \d y.$$
So, we have
\begin{multline*}
\left|\int b(x) \int \varphi(y) \left(\frac{1}{|x-y|^{d-\alpha}} - \frac{1}{|x|^{d-\alpha}} \right) \d y \d x
- \iint b(x) \left(\frac{1}{|x-y|^{d-\alpha}} - \frac{1}{|x|^{d-\alpha}} \right) \d x\varphi(y) \d y \right| \\
\leq \left| \int (b(x)-b_k(x)) \int \varphi(y) \left(\frac{1}{|x-y|^{d-\alpha}} - \frac{1}{|x|^{d-\alpha}} \right) \d y \d x\right|
 +\left|\iint (b(x)-b_k(x)) \left(\frac{1}{|x-y|^{n-\alpha}} - \frac{1}{|x|^{d-\alpha}} \right) \d x\varphi(y) \d y \right| \\
\leq \|b-b_k\|_{*} \|I_{\alpha}(\varphi)\|_{H^1}+ \|b-b_k\|_* \int |y|^\alpha |\varphi(y)| \d y.
\end{multline*}
So, taking $k$ to infinity yields 
$$\int b(x) \int \varphi(y) \left(\frac{1}{|x-y|^{d-\alpha}} - \frac{1}{|x|^{d-\alpha}} \right) \d y \d x
= \int \int b(x) \left(\frac{1}{|x-y|^{d-\alpha}} - \frac{1}{|x|^{d-\alpha}} \right) \d x\varphi(y) \d y.$$
Hence, we can associate the distribution $I_{\alpha}(b)$ with the function
$$I_{\alpha}(b)(x) := \int b(y) \left(\frac{1}{|x-y|^{d-\alpha}} -\frac{1}{|y|^{d-\alpha}} \right) \d y.$$
This proves the case when $0<\alpha<1$.  Now
we turn our attention to the case when
$1 \leq \alpha <2$.

Suppose that $f \in I_{\alpha}(BMO)$ for some $\alpha \in [1,2)$.  Let $b \in BMO(\mathbb R^d)$ be such that
$f = I_{\alpha}(b)$.  The partial derivatives of $f$ can be defined distributionally as
\begin{align*}
\partial_i f(\varphi) =-f(\partial_i\varphi),\,\,\, \varphi \in \mathcal S_0.
\end{align*}
Notice that when $\varphi \in \mathcal S_0$, $\partial_i \varphi \in \mathcal S_1$, so this definition makes sense.  Let 
$b \in BMO$ be the function such that
$$f(\varphi) = \langle b,I_{\alpha}(\varphi)\rangle.$$

Let $\mathcal R_i$ denote the $i$\textsuperscript{th} Riesz transform.  Notice that 
$I_{1}(\partial_i \varphi) =  \mathcal R_i(\varphi)$ for any Schwartz function $\varphi$, and some fixed constant $c$ which does not depend on $i$.
Let $H_0^1$ denote the subspace of $H^1$ which consists of finite sums of $H^1$ atoms.  We can define the Riesz transform of $b$ through an action
on $H_0^1$ functions
$$\mathcal R_i b(\varphi) := \langle b, -\mathcal R_i \varphi \rangle,\,\,\, \varphi\in H_0^1.$$
Notice that the mapping sending $\varphi \in \mathcal H_0^1$ to $\langle b,-\mathcal R_i  \varphi \rangle$ is a bounded linear functional on $H_0^1$, and 
hence extends continuously to a bounded linear functional on all of $H^1$ because $H_0^1$ is dense in $H^1$.  Moreover, because $BMO$
is the dual space of $H^1$, there exists some $\tilde b_i \in BMO$ such that
$$\mathcal R_i b(\varphi) = \langle \tilde b_i, \varphi \rangle$$
for all $\varphi \in H_0^1$.  So, we can identify $\mathcal R_i b$ with $\tilde b_i$.  Notice that $\mathcal S_i$ is a subset of $H^1$ for every
$i \geq 0$, and hence we can say that, as a member of $\mathcal S' /P_0$, the distribution $\mathcal R_i b$ coincides with $\tilde b_i$.  \\

For each $\varphi \in \mathcal S_0$, we have
$$\partial_i f(\varphi) = - \langle \tilde b_i,I_{\alpha-1}(\varphi) \rangle = - \langle I_{\alpha - 1}(\tilde b_i), \varphi \rangle.$$
From the results above, we know that $I_{\alpha - 1}(\tilde b_i)$ makes sense as a function.  Hence, $\partial_i f$ coincides with the function
$I_{\alpha-1}(-\tilde b_i) \in I_{\alpha -1}(BMO)$.  We let $\nabla f = (\partial_1 f,...,\partial_d f)$.\\

Consider the function
$$\tilde f(x) = \int_0^1 (\nabla f)(t x) \cdot x \d t.$$
We are going to show that the distribution $f$ coincides with $\tilde f$.  We have, for $\varphi \in \mathcal S_1$,
\begin{align*}
\langle \tilde f ,\varphi \rangle  &=\sum_{i=1}^d \int_{\mathbb R^d} \int_0^1 I_{\alpha-1}(-\tilde b_i)(tx)x_i \d t \varphi(x) \d x\\
&= \sum_{i=1}^d \int_0^1 \int_{\mathbb R^d}  I_{\alpha-1}(- \tilde b_i)(tx) x_i \varphi(x) \d x \d t\\
&=\sum_{i=1}^d \int_0^1 \int_{\mathbb R^d}  I_{\alpha-1}(- \tilde b_i)(x) \frac{x_i}{t} \varphi\left(\frac{x}{t}\right)t^{-d} \d x \d t.
\end{align*}
We justify the interchanging the variables of integration by noting that the integral above converges absolutely, which
can be seen after noting that $|I_{\alpha-1}(-\tilde b_i)(tx) -I_{\alpha-1}(-\tilde b_i)(0)| \lesssim t^{\alpha-1}|x|^{\alpha-1}$ 
and $|x|^{\alpha-1}x_i \varphi(x)$, $x_i\varphi(x)$ are both rapidly decaying.  We now take the Fourier transform in $x$ to obtain
\begin{align*}
\int_0^1 \langle \widehat{I_{\alpha}(b)}(\xi), \xi \cdot (\nabla \check \varphi)(t\xi) \rangle \d t  \\
=\langle \widehat{I_{\alpha}(b)}(\xi), \int_0^1  \xi \cdot (\nabla \check \varphi)(t\xi) \d t  \rangle \\
=\langle I_{\alpha}(b), \varphi \rangle.
\end{align*}
Hence, $\tilde f$ coincides with $I_{\alpha}(b)$.  In the display above, one can justify moving the integration in $t$ inside the brackets
using the fact that the Riemann sums of the integral converge to $\hat\varphi$ in the topology of Schwartz functions and the continuity of $I_{\alpha}(b)$
as a functional.
\end{proof}

\textit{Acknowledgements:} The author thanks Steve Hofmann for several helpful conversations.  He also thanks the anonymous referee for several helpful suggestions which have imporved the manuscript.

\end{document}